\documentclass{gtpart}
\usepackage{pinlabel}
\usepackage[all]{xy}
\usepackage{amssymb, amsmath, amsthm}
\usepackage{hyperref}
\usepackage{setspace}
\usepackage{graphicx}
\usepackage{cancel}

%
    \title{A colored operad for string link infection}

    \author{John Burke}
    \givenname{John}
    \surname{Burke}
    \address{Department of Mathematics, Rhode Island College, Providence, RI, USA}
    \email{jburke@ric.edu}

    \author{Robin Koytcheff}
    \givenname{Robin}
    \surname{Koytcheff}
    \address{Mathematics and Statistics, University of Victoria, Victoria, BC, Canada}
    \email{rmjk@uvic.ca}


    \keyword{string links}
    \keyword{infection}
    \keyword{operads}
    \keyword{spaces of links}
    \keyword{spaces of knots}


\subject{primary}{msc2000}{57M25}
\subject{primary}{msc2000}{18D50}
\subject{primary}{msc2000}{55P48}
\subject{primary}{msc2000}{57R40}
\subject{primary}{msc2000}{57R52}

\arxivreference{1311.4217}  
\arxivpassword{xufup}   

%

%
%

\newtheorem{theorem}{Theorem}[section]
\newtheorem{lemma}[theorem]{Lemma}
\newtheorem{proposition}[theorem]{Proposition}
\newtheorem{corollary}[theorem]{Corollary}

\newtheorem*{T1}{Theorem 1}
\newtheorem*{T2}{Theorem 2}

\theoremstyle{definition}
\newtheorem{remark}[theorem]{Remark}
\newtheorem{definition}[theorem]{Definition}
\newtheorem{example}[theorem]{Example}

%
%
\def\beq{\begin{eqnarray*}}
\def\eeq{\end{eqnarray*}}
\def\DD{\mathbb{D}}

\def\R{\mathbb{R}}
\def\Z{\mathbb{Z}}

\def\N{\mathbb{N}}
\def\bC{\mathbb{C}}

\def\incl{\hookrightarrow}
\def\to{\rightarrow}

\def\x{\times}
\def\d{\partial}
\def\phi{\varphi}

\def\Emb{\mathrm{Emb}}

\def\Dd{\mathcal{D}}
\def\P{\mathcal{P}}

\def\K{\mathcal{K}}

\def\Diff{\mathrm{Diff}}
\def\Aut{\mathrm{Aut}}

\def\supp{\mathrm{supp}}
\def\im{\mathrm{im}\>}

\def\C{\mathcal{C}}

\def\co{\colon\thinspace}

\def\SC{\mathcal{SC}}
\def\SP{\mathcal{SP}}
\def\O{\mathcal{O}}
\def\I{\mathcal{I}}
\def\FSL{\mathrm{FSL}}
\def\EC{\mathrm{EC}}
\def\S{\mathcal{S}}
\def\PS{\mathcal{PS}}
\def\L{\mathcal{L}}
\def\LL{\widehat{\L}}


\begin{document}
\begin{abstract}
Budney constructed an operad that encodes splicing of knots and further showed that the space of (long) knots is generated over this splicing operad by the space of torus knots and hyperbolic knots.  This generalized the satellite decomposition of knots from isotopy classes to the level of the space of knots.  Infection by string links is a generalization of splicing from knots to links.  We construct a colored operad that encodes string link infection.  We prove that a certain subspace of the space of 2-component string links is generated over a suboperad of our operad by its subspace of prime links.  This generalizes a result from joint work with Blair from isotopy classes of string links to the space of string links.  Furthermore, all the relations in the monoid of 2-string links (as determined in our joint work with Blair) are captured by our infection operad.
\end{abstract}
\maketitle


\section{Introduction}
\label{Intro}
This paper concerns operations on knots and links, particularly infection by string links.  Classically, knots and links are considered as isotopy classes of embeddings of a 1-manifold into a 3-manifold, such as $\R^3$, $D^3$, or $S^3$.  Instead of considering just isotopy classes, we consider the whole \emph{space} of links, that is the space of embeddings of a certain 1-manifold into a certain 3-manifold.
We also consider spaces parametrizing the operations and organize all of these spaces via the concept of an operad (or colored operad).  The operad framework is in turn convenient for studying spaces of links and generalizing statements about isotopy classes to the space level.  Finding such statements to generalize was the motivation for recent work of the authors and R Blair on isotopy classes of string links \cite{StringLinkMonoid}.

Our work closely follows the work of Budney.
Budney first showed that the little 2-cubes operad $\C_2$ acts on the space $\K$ of (long) knots, which implies the well known commutativity of connect-sum of knots on isotopy classes.  He showed that $\K$ is freely generated over $\C_2$ by the space $\P$ of prime knots, generalizing the prime decomposition of knots of Schubert from isotopy classes to the level of the space of knots \cite{Budney}.  Later, he constructed a splicing operad $\SP$ which encodes splicing of knots.  He showed that $\K$ is freely generated over a certain suboperad of $\SP$ by the subspace of torus and hyperbolic knots, thus generalizing the satellite decomposition of knots from isotopy classes to the space level \cite{BudneySplicing}.

Infection by string links is a generalization of splicing from knots to links.  This operation is most commonly used in studying knot concordance.  One instance where string link infection arises is in the clasper surgery of Habiro \cite{Habiro}, which is related to finite-type invariants of knots and links.  In another vein, Cochran, Harvey, and Leidy observed that iterating the infection operation gives rise to a fractal-like structure \cite{CHLPrimaryDecomposition}.  This motivated our work, and we provide another perspective on the structure arising from string link infection.  We do this by constructing a colored operad which encodes this infection operation.  We then prove a statement that decomposes part of the space of 2-component string links via our colored operad.

Splicing and infection are both generalizations of the connect-sum operation.  The latter is always a well defined operation on isotopy classes of knots, but if one considers long knots, it is even well defined on the knots themselves.  This connect-sum operation (i.e., ``stacking'') is also well defined for long (a.k.a.~string) links with any number of components.  Thus we restrict our attention to string links.

\subsection{Basic definitions and remarks}
Let $I=[-1,1]$ and let $D^2 \subset \R^2 \cong \bC$ be the unit disk with boundary.
\begin{definition}
\label{StringLink}
A \emph{$c$-component string link} (or \emph{$c$-string link}) is a proper embedding of $c$ disjoint intervals
\[
\coprod_c I \incl I \x D^2
\]
whose values and derivatives of all orders at the boundary points agree with those of a fixed embedding $i_c$.  
For concreteness, we take $i_c$ to be the map which on the $i^\mathrm{th}$ copy of $I$ is given by $t \mapsto (t, x_i)$ 
where $x_i = \left(\frac{i-1}{c}, 0\right)$.  We will call $i_c$ the trivial string link.
Another example of a string link is shown in Figure \ref{stringlink}.
\end{definition}

\begin{figure}[h!]
\begin{picture}(300,108)
\put(37,0){\includegraphics[scale=.7]{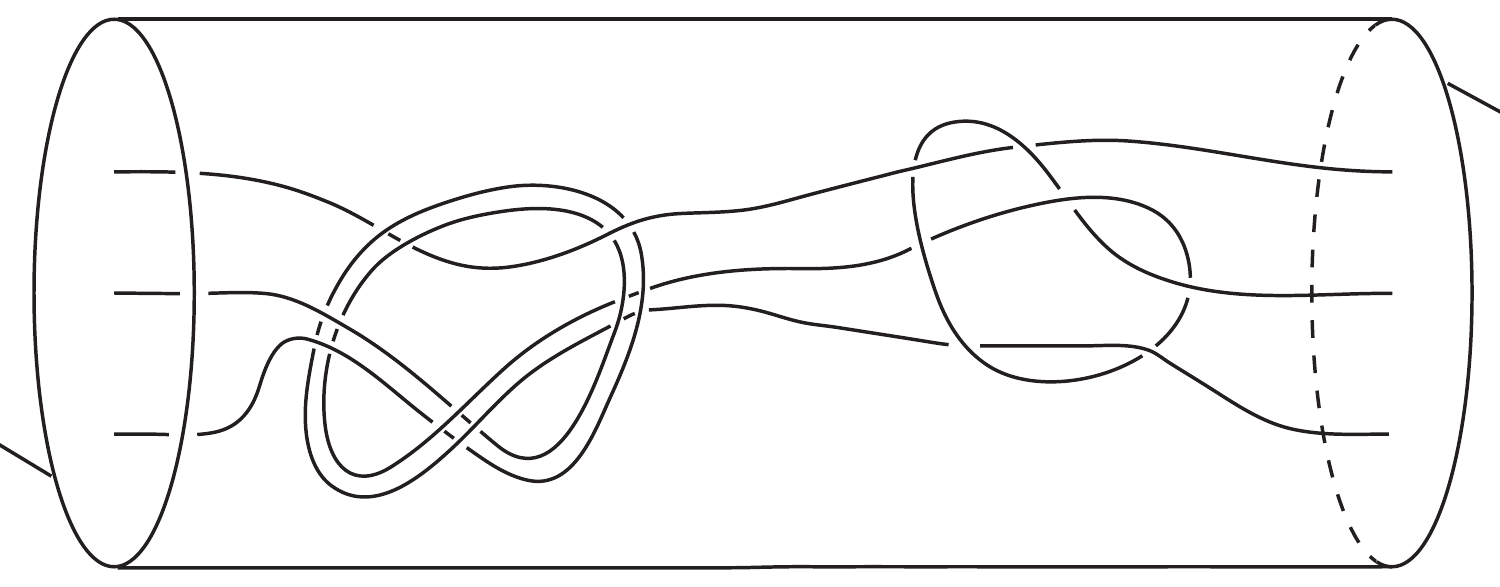}}
\put(-3,33){$D^2 \x \{0\}$}
\put(335,85){$D^2 \x \{1\}$}
\end{picture}
\caption{A string link}
\label{stringlink}
\end{figure}
In our work \cite{StringLinkMonoid}, our definition of string links allowed more general embeddings, and the ones defined above were called ``pure string links.''  We choose the definition above in this paper because infection by string links behaves more nicely with this more restrictive notion of string link.  (Specifically, it preserves the number of components in the infected link.)

The condition on derivatives is not always required in the literature.\footnote{The homotopy type of the space of such embeddings would be unchanged by omitting the condition on derivatives, since the space of possible tangent vectors and higher-order derivatives at the boundary is contractible.}  We impose it because this allows us to identify a $c$-string link with an embedding $\coprod_c \R \incl \R \x D^2$ which agrees with a fixed embedding outside of $I \x D^2$.  Let $\L_c = \Emb(\coprod_c \R, \R\x D^2)$ denote the space of $c$-string links, equipped with the $C^\infty$ Whitney topology.  An isotopy of string links is a path in this space, so the path components
of $\L_c$ are precisely the isotopy classes of $c$-string links.  Often we will write $\K$ for the space $\L_1$ of long knots.

The braids which qualify as string links under Definition \ref{StringLink} are precisely the pure braids. 
There is a map from $\L_c$ to the space $\Emb(\coprod_c S^1, \R^3)$ of closed links in $\R^3$ by taking the closure of a string link.  When $c=1$, this map is an isomorphism on $\pi_0$.  In other words, isotopy classes of long knots correspond to isotopy classes of closed knots.
In general, this map is easily seen to be surjective on $\pi_0$, but it is not injective on $\pi_0$.  For example, any string link and its conjugation by a pure braid yield isotopic closed links, and for $c \geq 3$, there are conjugations of string links by braids which are not isotopic to the original string link.
We will sometimes write just ``link'' rather than ``string link'' or ``closed link'' when the type of link is either clear from the context or unimportant.

\subsection{Main results}
Our first main result is the construction of a colored operad encoding string link infection.  An operad  $\O$ consists of spaces $\O(n)$ of $n$-ary operations for all $n\in \N$.  Roughly, an operad acts on a space $X$ if each $\O(n)$ can parametrize ways of multiplying $n$ elements in $X$.  (We provide thorough definitions in Section \ref{Operads}.)  A colored operad arises when different types of inputs must be treated differently.  In our case, we have to treat string links with different numbers of components differently, so the colors in our colored operad are the natural numbers.  This theorem is proven as Theorem \ref{DefnThm}  and Proposition \ref{CommRelations}.

\begin{T1}
There is a colored operad $\I$ which encodes the infection operation and acts on spaces of string links $\L_c$ for $c=1,2,3,...$.
\begin{itemize}
\item
When restricting to the color 1, the (ordinary) operad $\I_{\{1\}}$ which we recover is Budney's splicing operad, and the action of $\I_{\{1\}}$ on $\K$ is the same as Budney's splicing operad action.
\item
For any $c$, the operad $\I_{\{c\}}$ obtained by restricting to $c$ is an operad which admits a map from the little intervals operad $\C_1$.  The resulting $\C_1$-action on $\L_c$ encodes the operation of stacking string links.
\item
On the level of $\pi_0$, our infection operad encodes all the relations in the whole 2-string link monoid.
\end{itemize}
\end{T1}

We then use our colored operad to decompose part of the space of string links.  We rely on an analogue of prime decomposition for 2-string links proven in our joint work with R Blair \cite{StringLinkMonoid}, so we must restrict to $c=2$.
We consider a ``stacking operad'' $\I_\#$, which is a suboperad of $\I_{\{2\}}$ and which is homeomorphic to the little intervals operad.  This operad simply encodes the operation of stacking 2-string links in $I \x D^2$, with the little intervals acting in the $I$ factor.
The theorem below is proven as Theorem \ref{DecompThm}.

\begin{T2}
Let $\pi_0 \S_2$ denote the submonoid of $\pi_0 \L_2$ generated by those prime 2-string links which are not central.  (By \cite{StringLinkMonoid}, this monoid is free.)  Let $\S_2$ be the subspace of $\L_2$ consisting of the path components of $\L_2$ that are in $\pi_0 \S_2$.
Then $\pi_0 \S_2$ is freely generated as a monoid over the stacking suboperad $\I_\#$;
The generating space is the subspace consisting of those components in $\S_2$ which correspond to prime string links.
\end{T2}

\subsection{Organization of the paper}
In Section \ref{Infection}, we review the definition of string link infection.

In Section \ref{Operads}, we review the definitions of an operad and the particular example of the little cubes operad.  We then give the more general definition of a colored operad.

In Section \ref{Budney}, we review Budney's operad actions on the space of knots.  This includes his action of the little 2-cubes operad, as well as the action of his splicing operad.

In Section \ref{InfectionOperad}, we define our colored operad for infection and prove Theorem 1.  We make some remarks about our operad related to pure braids and rational tangles, and we briefly discuss a generalization to embedding spaces of more general manifolds.

In Section \ref{Decomp}, we focus on the space of 2-string links.  We prove Theorem 2, which decomposes part of the space of 2-string links in terms of a suboperad of our infection colored operad.  We conclude with several other statements about the homotopy type of certain components of the space of 2-string links.  \\

\textbf{Notation:}
\begin{itemize}
\item
$\coprod_c X$ means $\underset{\mbox{$c$ times}}{\underbrace{X \sqcup ... \sqcup X}}$
\item
$f|A$ means the restriction of $f$ to $A$
\item
$\overline{X}$ denotes the closure of $X$; $\overset{\circ}{X}$ denotes the interior of $X$
\item
 $[a]$ denotes the equivalence class represented by an element $a$; $[a_1,..a_n]$ denotes the equivalence class of a tuple $(a_1,...,a_n)$.
\end{itemize}

\subsection{Acknowledgments}
The authors thank Tom Goodwillie for useful comments and conversations.  They thank Ryan Budney for useful explanations and especially for his work which inspired this project.  They thank Ryan Blair for useful conversations and for invaluable contributions in their joint work with him, on which Theorem 2 depends.  They thank a referee for a careful reading of the paper and useful comments.  They thank Connie Leidy for suggesting the rough idea of this project.  They thank Slava Krushkal for suggesting terminology and for pointing out the work of Habiro.  Finally, they thank David White for introducing the authors to each other.  The second author was supported partly by NSF grant DMS-1004610 and partly by a PIMS Postdoctoral Fellowship.

\section{Infection}
\label{Infection}

Infection is an operation which takes a link with additional decoration together with a string link and produces a link. This operation is a generalization of splicing which in turn is a generalization of the connect-sum operation. Infection has been called multi-infection by Cochran, Friedl, and Teichner \cite{CochranFriedlTeichner}, infection by a string link by Cochran \cite{Cochran2004} and tangle sum by Cochran and Orr \cite{CochranOrr1994}. Special cases of this construction have been used extensively since the late 1970's, for example in the work of Gilmer \cite{Gilmer1983}; Livingston \cite{Livingston2005}; Cochran, Orr, and Teichner \cite{CochranOrrTeichner2003, CochranOrrTeichner2004}; Harvey \cite{Harvey2008}; and Cimasoni \cite{Cimasoni2006}. The operad we define in this paper will encode a slightly more general operation than the infection operation that has been defined in previous literature. This section is meant to
inform the reader of the definition in previous literature and
provide motivation for the infection operad.

\subsection{Splicing}

Consider a link $R \in S^3$ and a closed curve $\eta \in S^3 \setminus R$ such that $\eta$ bounds an embedded disk in $S^3$ ($\eta$ is unknoted in $S^3$) which intersects the link components transversely. Given a knot $K$, one can create a new link $R_{\eta}(K)$, with the same number of components as $R$, called the result of splicing $R$ by $K$ at $\eta$. Informally, the splicing process is defined by taking the disk in $S^3$ bounded by $\eta$; cutting $R$ along the disk; grabbing the cut strands; tying them into the knot $K$ (with no twisting among the strands) and regluing. The result of splicing given a particular $R$, $\eta$ and $K$ is show in Figure \ref{splicing}. Note that if $\eta$ simply linked one strand of $R$ then the result of the splicing would be isotopic to the connect-sum of $R$ and $K$.

\begin{figure}[h!]
\begin{picture}(420,120)
\put(0,20){\includegraphics[scale=.8]{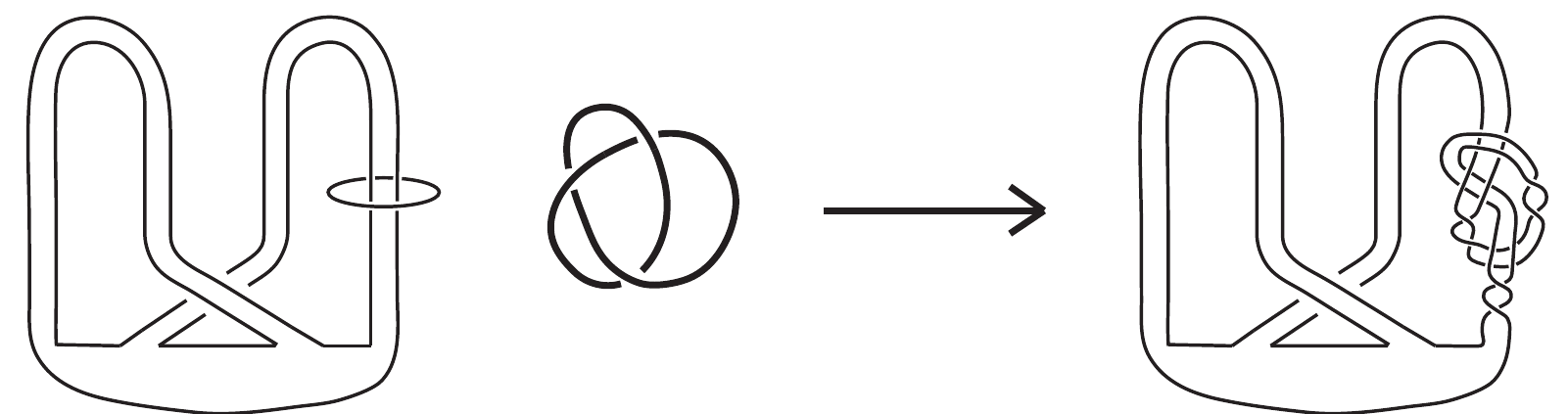}}
\put(45,0){$R$}
\put(100,63){$\eta$}
\put(145,32){$K$}
\put(301,0){$R_{\eta}(K)$}
\end{picture}
\caption{An example of the splicing operation.}
\label{splicing}
\end{figure}

Formally, $R_{\eta}(K)$ is arrived at by first removing a tubular neighborhood, $N(\eta)$, of $\eta$ from
$S^3$. Note $S^3 \setminus N(\eta) \subset S^3$ is a solid torus with $R$ embedded in its interior. Let $C_K$ denote the complement in $S^3$ of a tubular neighborhood of $K$.
Since the boundary of $C_K$ is also a torus, one can identify these two manifolds along their boundary.
In order to specify the identification, we use the terminology of meridians and longitudes.
Recall that the meridian of a knot is the simple closed curve, up to ambient isotopy, on the boundary of the complement of the knot which bounds a disk in the closure of the tubular neighborhood of the knot and has +1 linking number with the knot. Also recall that the longitude of a knot is the simple closed curve, up to ambient isotopy, on the boundary of the complement of the knot which has +1 intersection number with the meridian of the knot and has zero linking number with the knot.

The gluing of $S^3 \setminus N(\eta)$ to $C_K$ is done so that
the meridian of the boundary of $S^3 \setminus N(\eta)$
is identified with the meridian of $K$ in the boundary of $C_K$. Note that this process describes a Dehn surgery with surgery coefficient $\infty$ along $K \subset S^3$ where the solid torus glued in is $S^3 \setminus N(\eta)$. Thus, the resulting manifold will be a 3-sphere with a subset of disjoint embedded circles whose union is $R_{\eta}(K)$ (the image of $R$ under this identification).
Although the embedding of $R_\eta(K)$ in $S^3$ depends on the identification of the surgered 3-manifold with $S^3$, its isotopy class is independent of this choice of identification.

\subsection{String link infection}

Although there is a well studied generalization of the connect-sum operation from closed knots to closed links, there is no generalization of splicing by a closed link. There is, however, a generalization of splicing called infection by a string link, which we will now define. See the work of Cochran, Friedl, and Teichner \cite[Section 2.2]{CochranFriedlTeichner} for a thorough reference.

By an \emph{$r$-multi-disk} $\DD$ we mean the oriented disk $D^2$ together with $r$ ordered embedded open disks $D_1, \dots D_r$ (see Figure \ref{Multidisk}). Given a link $L \subset S^3$ we say that an embedding $\varphi \co \DD \rightarrow S^3$ of an $r$-multi-disk into $S^3$ is \emph{proper} if the image of the multi-disk, denoted by $\Dd$, intersects the link components transversely and only in the images of the disks $D_1, \dots D_r$ as in Figure \ref{Multidisk}. We will refer to the image of the boundary curves of $\varphi(D_1), \dots, \varphi(D_r)$ by $\eta_1, \dots, \eta_r$.

\begin{figure}[h!]
\begin{picture}(400,255)
\put(60,0){\includegraphics[scale=1]{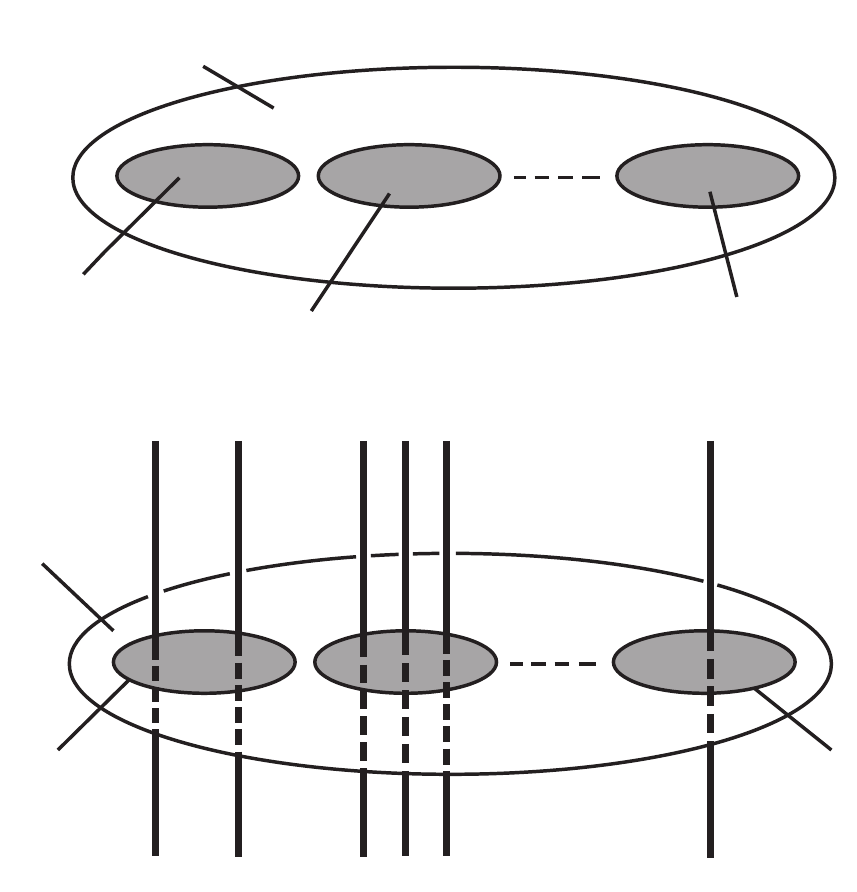}}
\put(110,240){$\DD$}
\put(73,171){$D_1$}
\put(139,160){$D_2$}
\put(270,162){$D_r$}
\put(63,93){$\Dd$}
\put(70,35){$\eta_1$}
\put(300,35){$\eta_r$}
\end{picture}
\caption{An $r$-multi-disk and a properly embedded multi-disk}
\label{Multidisk}
\end{figure}

Suppose $R \subset S^3$ is link, $\Dd \subset S^3$ is the image of a properly embedded $r$-multi-disk, and $L$ is an $r$ component string link.  Then informally,
the infection of $R$ by $L$ at $\Dd$, denoted by $R_{\Dd}(L)$, is the link obtained by tying the $r$ collections of strands of $R$ that intersect the disks $\varphi(D_1), \dots, \varphi(D_r)$ into the pattern of the string link $L$, where the strands linked by $\eta_i$ are identified with the $i^\text{th}$ component of $L$, such that the $i^\text{th}$ collection of strands are parallel copies of the $i^\text{th}$ component of $L$. Figure \ref{infection} shows an example of this operation.

We now define this operation formally.
Given a string link $L \co \coprod_r I \incl I \x D^2$, let $C_L$ denote the complement of a tubular neighborhood of (the image of) $L$ in $I \x D^2$. In Figure \ref{stringlinkwithcomplement} an example of a string link is shown with its complement to the right. The meridian of a component of a string link is the simple closed curve, up to ambient isotopy, on the $\partial D^2 \times I$ boundary of the closure of the tubular neighborhood of the component which bounds a disk and has +1 linking number with the component. We call the set of such meridians the meridians of the string link. The longitude of a component of a string link is a properly embedded line segment $f\co I \rightarrow \partial D^2 \times I$, up to ambient isotopy, on the $\partial D^2 \times I$ boundary of the closure of the tubular neighborhood of the component; it is required to have +1 intersection number with the meridian of that component, to have zero linking number with that component, and to satisfy $f(0) = (1,0) \in \partial D^2 \times \{0\}$ and $f(1) = (1,1) \in \partial D^2 \times \{1\}$.  We call the set of such longitudes the longitudes of the string link. In Figure \ref{stringlinkwithcomplement} the meridians, $\mu_i$, and longitudes, $\ell_i$, are shown on the boundary of the complement. Note that the boundary of the complement of any $r$-component string link is homeomorphic to a genus-$r$ orientable surface.

\begin{figure}[h!]
\begin{picture}(300,135)
\put(43,0){\includegraphics[scale=1]{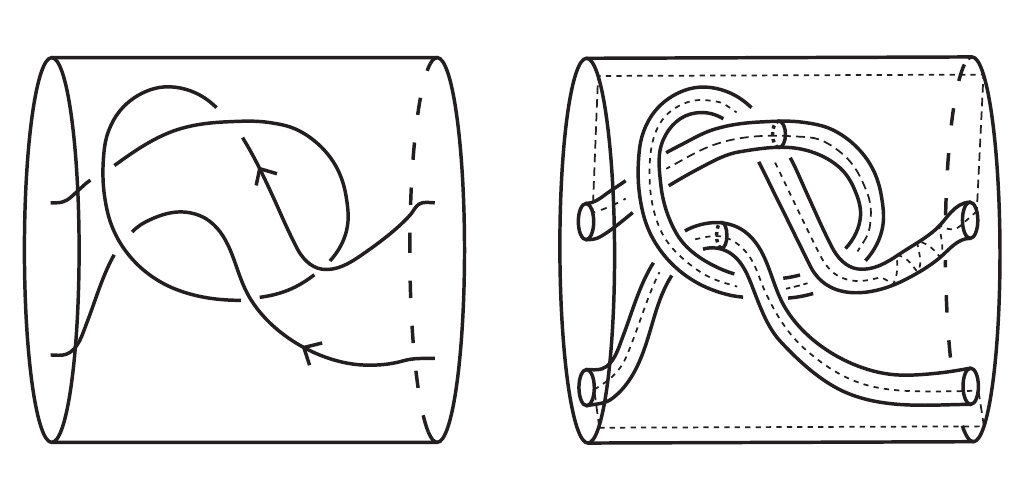}}
\end{picture}
\put(-27,25){$\footnotesize{\ell_1}$}
\put(-54,85){$\footnotesize{\mu_1}$}
\put(-35,114){$\footnotesize{\mu_2}$}
\put(3,85){$\footnotesize{\ell_2}$}
\caption{A string link and its complement.}
\label{stringlinkwithcomplement}
\end{figure}

Let $R \subset S^3$ be a link, and let $L\co \coprod_r I \incl I \x D^2$ be a string link.
Fix a proper embedding of a thickened $r$-multidisk $\Dd \x I$ in $S^3 \setminus R$.
Formally the infection of $R$ by $L$ at $\Dd$ is obtained by removing $(\Dd \setminus \sqcup_i \varphi(D_1)) \times I$ from $S^3$ and gluing in the complement of $L$. Note that $(\Dd \setminus \sqcup_i \varphi(D_i))\times I$ is the complement of a $r$-component trivial string link $T$ (see Figure \ref{infection}), and thus the boundary of $S^3 \setminus ((\Dd \setminus \sqcup_i \varphi(D_1))\times I)$ is a genus-$r$ orientable surface. One identifies this boundary and the boundary of the complement of $L$, $\partial C_L$, first by identifying $\partial \Dd \times I$ with $\partial D^2 \times I$ subset of the boundary of $C_L$ where $\partial D^2 \times I$ is taken to be a subset of the boundary of $D^2 \times I$ where $L$ lives, $(\Dd \setminus \sqcup_i \varphi(D_i)) \times \{0,1\}$ is identified with $(D^2 \setminus N(L)) \times \{0,1\}$ and the $D^2 \times I$ components of the closure of $N(T)$ and $N(L)$ are identified so that the meridians and longitudes of $L$ are identified with the meridians and longitudes of $T$.

We claim that the resulting manifold is $S^3$ containing a link $R_{\Dd}(L)$ (the image of $R$ under this identification). The resulting manifold is homeomorphic to $S^3$ because
\begin{align*}
& S^3 \setminus Int((\Dd \setminus \sqcup_i \varphi(D_1)) \times I) \,\,\, \cup \,\,\, (D^2 \times I) \setminus N(L) \\
= &(S^3 \setminus \Dd \times I)  \,\,\, \cup   \,\,\,   \left( (\sqcup_i(\varphi(D_i) \times I))   \,\,\,  \cup \,\,\,    (D^2\times I)\setminus N(L)\right) \\
\cong & S^3
\end{align*}
where the last homeomorphism follows form the observation that the previous space is the union of two 3-balls.
Again, the specific embedding of $R_{\mathcal{D}}(L)$ will depend on the choice of homeomorphism, but all choices will yield isotopic embeddings.



\begin{figure}[h!]
\begin{picture}(450,300)
\put(80,0){\includegraphics[scale=.7]{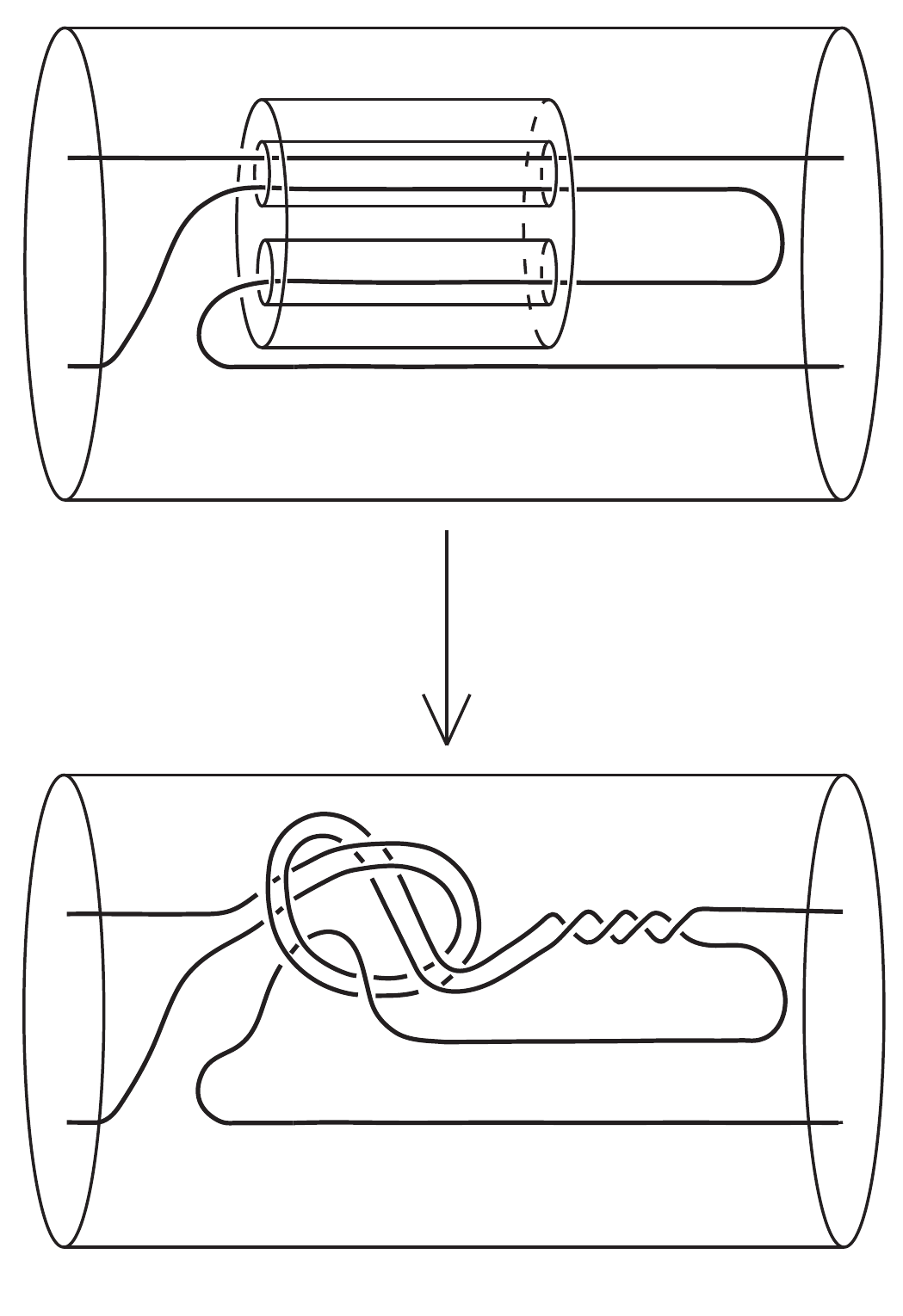}}
\put(100,183){$R$}
\put(129,284){$\Dd$}
\end{picture}
\caption{Infection of the string link $R$ along $\Dd$ by the string link $L$ from Figure \ref{stringlinkwithcomplement}.}
\label{infection}
\end{figure}

\section{Operads}
\label{Operads}
We start by reviewing the definitions of an operad $\O (=\{\O(n)\}_{n\in \N})$, and 
an action of $\O$ on $X$
(a.k.a.~an algebra $X$ over $\O$).  We then proceed to colored operads.  Technically, the definition of a colored operad subsumes the definition of an ordinary operad, but for ease of readability, we first present ordinary operads.  Readers familiar with these concepts may safely skip this Section.

\subsection{Operads}
  Operads can be defined in any symmetric monoidal category, but we will only consider the category of topological spaces.  In this case, the rough idea is as follows.  An algebra $X$ over an operad $\O$ is a space with a multiplication $X\x X\to X$, and the space $\O(n)$ parametrizes ways of multiplying $n$ elements of $X$, i.e., maps $X^n\to X$.  In other words, $\O(n)$ captures homotopies between different ways of multiplying the elements, as well as homotopies between these homotopies, etc.  Thus an element of $\O(n)$ is an operation with $n$ inputs and one output.  This can be visualized as a tree with $n$ leaves and a root, and in fact, free operads are certain spaces of decorated trees.
For a more detailed introduction, the reader may wish to consult the book of Markl, Shnider, and Stasheff \cite{MarklShniderStasheff}, May's book \cite{goils}, or the expository paper of McClure and Smith \cite{MSIntro}.

\begin{definition}
An operad $\O$ (in the category of spaces) consists of
\begin{itemize}
\item
a space $\O(n)$ for each $n=1,2,...$ with an action of the symmetric group $\Sigma_n$
\item
structure maps
\begin{equation}
\label{OperadMaps}
\O(n) \x \O(k_1) \x ... \x \O(k_n) \to \O(k_1 + ... + k_n)
\end{equation}
\end{itemize}
such that the following three conditions are satisfied:
\begin{itemize}
\item
\emph{Associativity}: the following diagram commutes:
\[
\xymatrix{
\O(n) \x \prod_{i=1}^n \O(k_i) \x \prod_{i=1}^n\prod_{j=1}^{k_i} \O(\ell_{i,j})  \ar[r] \ar[d]  & \O(n) \x \prod_{i=1}^n \O(\sum_{j=1}^{k_i} \ell_{i,j}) \ar[d] \\
\O(k_1+...+k_n) \x \prod_{i=1}^n \prod_{j=1}^{k_i} \O(\ell_{i,j}) \ar[r] & \O(\sum_{i=1}^n\sum_{j=1}^{k_i} \ell_{i,j})
}
\]
\item
\emph{Symmetry}:
Let $\sigma \x \sigma$ denote the diagonal action on the product $\O(n) \x (\O(k_1)\x...\x \O(k_n))$
coming from the actions of $\Sigma_n$ on $\O(n)$ and on $\O(k_1)\x...\x \O(k_n)$ by permuting the factors.  For a partition $\vec{k}=(k_1,...,k_n)$ of a natural number $k_1+...+k_n$, let $\sigma_{\vec{k}} \in \Sigma_{k_1+...+k_n}$ denote the ``block permutation''  induced by $\sigma$ and the partition $\vec{k}$. 

We require that the following composition agrees with the map (\ref{OperadMaps}):
\[
\xymatrix{
\O(n) \x \prod_{i=1}^n \O(k_i)  \ar[r]^{\sigma \x \sigma} & \O(n) \x \prod_{i=1}^n \O(k_{\sigma(i)}) \ar[r] & \O(\sum_{i=1}^n k_i) \ar[r]^{\sigma_{\vec{k}}^{-1}}& \O(\sum_{i=1}^n k_i)
}
\]
We also require that for $\tau_i \in \Sigma_{k_i}$ for $i=1,...,n$, the following diagram commutes:
\[
\xymatrix{
\O(n) \x \prod_{i=1}^n \O(k_i)  \ar[d]_{id \x \tau_1 \x... \x \tau_n}  \ar[r] & \O(\sum_{i=1}^n k_i) \ar[d]^{\tau_1 \x ... \x \tau_n} \\
\O(n) \x \prod_{i=1}^n \O(k_i)  \ar[r]  & \O(\sum_{i=1}^n k_i)
}
\]

\item
\emph{Identity}:
There exists an element $1 \in \O(1)$ (i.e., a map $\ast \to \O(1)$) which induces the identity on $\O(k)$ via
\begin{align*}
\O(1) \x \O(k) &\to \O(k) \\
(1, L) &\mapsto L
\end{align*}
and which induces the identity on $\O(n)$ via
\begin{align*}
\O(n) \x \O(1) \x \O(1) \x ... \x \O(1) & \to \O(n) \\
(L,1,1,...,1) &\mapsto L.
\end{align*}
\qed
\end{itemize}
\end{definition}

Some authors define the structure maps via $\circ_i$ operations, i.e., plugging in just one operation into the $i^\mathrm{th}$ input, as opposed to $n$ operations into all $n$ inputs.  These $\circ_i$ maps can be recovered from the above definition by setting $k_j=1$ for all $j\neq i$ and using the identity element in $\O(1)$.

\begin{definition}
Given an operad $\O$, an 
\emph{action of $\O$ on $X$}
(also called an \emph{algebra $X$ over $\O$}) is a space $X$ together with maps
\[
\O(n) \x X^n \to X
\]
such that the following conditions are satisfied:
\begin{itemize}
\item
\emph{Associativity}:
The following diagram commutes:
\[
\xymatrix{
\O(n) \x \O(k_1)\x ... \x \O(k_n) \x X^{k_1+...+k_n} \ar[r] \ar[d] & \O(n) \x X^n \ar[d] \\
\O(k_1+...+k_n) \x X^{k_1+...+k_n} \ar[r] & X
}
\]
\item
\emph{Symmetry}:
For each $n$, the action map is $\Sigma_n$-invariant, where $\Sigma_n$ acts on $\O(n)$ by definition, on $X^n$ by permuting the factors, and on the product diagonally.  In other words, the action map descends to a map
\[
\O(n) \x_{\Sigma_n} X^n \to X
\]
\item
\emph{Identity}:
The identity element $1\in \O(1)$ together with the map
\[
\O(1) \x X \to X
\]
induce the identity map on $X$, i.e., the map takes $(1,x)\mapsto x$.
\end{itemize}
\qed
\end{definition}

\subsection{The little cubes operad}

Our infection colored operad extends Budney's splicing operad, which in turn was an extension of Budney's action of the little 2-cubes operad on the space of long knots.  Thus the little 2-cubes operad is of interest here.

\begin{definition}
The \emph{little $j$-cubes operad} $\C_j$ is the operad with $\C_j(n)$ the space of maps
\[
(L_1,...,L_n) \co \coprod_n I^j \incl I^j
\]
which are embeddings when restricted to the interiors of the $I^j$ and which are increasing affine-linear maps in each coordinate.  The structure maps are given by composition:
\begin{align*}
\C_j(n) \x \C_j(k_1)\x ... \x \C_j(k_n) &\to \C_j(k_1+...+k_n) \\
(L_1,...,L_n), (L^1_1,...,L^1_{k_1}),...,(L^n_1,...,L^n_{k_n}) &\mapsto (L_1 \circ (L^1_1,...,L^1_{k_1}),..., L_n \circ (L^n_1,...,L^n_{k_n}))
\end{align*}
\qed
\end{definition}

Note that for all $j\geq 2$, the multiplication induced by choosing (any) element in $\C_j(2)$ is commutative \emph{up to homotopy}, which can be seen via the same picture that shows that $\pi_j X$ is abelian for $j\geq 2$.

\subsection{Colored Operads}
Now we present the precise definitions of a colored operad and an action of a colored operad on a space.  This generalization of an operad is necessary to generalize Budney's operad from splicing of knots to infection by links.

\begin{definition}
\label{ColoredOperad}
A \emph{colored operad} $\O = (\O, C)$ (in the category of spaces) consists of
\begin{itemize}
\item
a set of colors $C$
\item
a space $\O(c_1,...,c_n; c)$ for each $(n+1)$-tuple $(c_1,...,c_n,c)\in C$ together with compatible maps $\O(c_1,...,c_n; c)\to \O(c_{\sigma(1)},...,c_{\sigma(n)}; c)$ for each $\sigma \in \Sigma_n$
\item
(continuous) maps
\[
\O(c_1,...,c_n;c) \x \O(d_{1,1},...,d_{1,k_1}; c_1) \x ... \x \O(d_{n,1},...,d_{n,k_n}; c_n) \to \O(d_{1,1},...,d_{n,k_n}; c)
\]
\end{itemize}
where the maps satisfy the following three conditions:
\begin{itemize}
\item
\emph{Associativity}: The map below is the same regardless of whether one first applies the structure maps to the first two factors or the last two factors on the left-hand side:
\end{itemize}
\[
\xymatrix{
\O(c_1,...,c_n;c) \x \prod_{i=1}^n \O(d_{i,1},...,d_{i,k_i}; c_i) \x \prod_{i=1}^n \prod_{j=1}^{k_i} \O(e_{i,j,1},..., e_{i,j,\ell_{i,j}}; d_{i,j})  \ar[r] &
\O\negthinspace\left(e_{1,1,1},...,e_{n,k_n, \ell_{1,k_n}}\right)
}
\]
\begin{itemize}
\item
\emph{Symmetry}:  
The following diagram below commutes.  The vertical map is induced by $\sigma$ in both the first factor and the last $n$ factors, and $\sigma_{\vec{k}} \in \Sigma_{k_1+...+k_n}$ is the block permutation induced by $\sigma$ and the partition $(k_1,...,k_n)$.
\[
\xymatrix{
\O(c_1,...,c_n; c) \x \prod_{i=1}^n \O(d_{i,1},...,d_{i,k_i}; c_i)  \ar[r] \ar[d]_{\sigma \x \sigma} & \O(d_{1,1},...,d_{n,k_n}; c) \ar[d]^{\sigma_{\vec{k}}} \\
\O(c_{\sigma(1)},...,c_{\sigma(n)}; c) \x  \prod_{i=1}^n \O(d_{\sigma(i),1},...,d_{\sigma(i), k_{\sigma(i)}}; c_{\sigma(i)})  \ar[r] &
\O(d_{1,1},...,d_{n,k_n}; c)
}
\]
We also require that, for $\tau_i \in \Sigma_{k_i}$, $i=1,...,n$, the following diagram commutes:
\[
\xymatrix{
\O(c_1,...,c_n; c) \x \prod_{i=1}^n \O(d_{i,1},...,d_{i,k_i}; c_i)  \ar[r] \ar[d]_{id \x \tau_1 \x...\x \tau_n} & \O(d_{1,1},...,d_{n,k_n}; c) \ar[d]^{\tau_1 \x... \x \tau_n} \\
\O(c_1,...,c_n; c) \x  \prod_{i=1}^n \O(d_{i,1},...,d_{i, k_i}; c_i)  \ar[r] &
\O(d_{1,1},...,d_{n,k_n}; c)
}
\]

\item
\emph{Identity}: For every $c\in C$, there is an element $1_c\in \O(c;c)$ which together with
\[
\O(c;c) \x \O(c_1,...,c_n;c) \to \O(c_1,...,c_n;c)
\]
induces the identity map on $\O(c_1,...,c_n;c)$.  We also require that the elements $1_{c_1},...,1_{c_n}$ together with
\[
\O(c_1,...,c_n;c) \x \O(c_1;c_1) \x ...\x \O(c_n;c_n) \to \O(c_1,...,c_n;c)
\]
induce the identity map on $\O(c_1,...,c_n;c)$.
\end{itemize}
\qed
\end{definition}

The colors $c_1,..,c_n$ can be thought of as the colors of the inputs, while $c$ is the color of the output.  A colored operad with $C=\{c\}$ is just an operad, where
$\O(\underset{\mbox{$n$ times}}{\underbrace{c,...,c}};c)$
is $\O(n)$.  Sometimes, for brevity, we write ``operad'' to mean ``colored operad.''

Note that if we have a colored operad $\O$ with colors $C$ and a subset $C' \subset C$, we can restrict to another colored operad $\O_{C'}$ consisting of just the spaces $\O(c_1,...,c_n;c)$ with $c_i, c \in C'$ (and the same structure maps as $\O$).


\begin{definition}
Given a colored operad $\O=(\O, C)$, 
an \emph{action} of $\O$ on $A$
(also called an $\O$-\emph{algebra} $A$) consists of a collection of spaces $\{ A_c\}_{c\in C}$ together with maps
\begin{equation}
\label{ColoredOperadAction}
\O(c_1,...,c_n; c) \x A_{c_1} \x ... \x A_{c_n} \to A_c
\end{equation}
satisfying the following conditions:
\begin{itemize}
\item
\emph{Associativity}: The following diagram commutes:
\[
\xymatrix{
\O(c_1,...,c_n;c) \x \prod_{i=1}^n \O(d_{i,1},...,d_{i,k_i}; c_i) \x \prod_{j=1}^n A_{d_{j,k_j}}
\ar[r] \ar[d]  &
\O(c_1,...,c_n; c) \x \prod_{i=1}^n A_{c_i}
\ar[d] \\
\O(d_{1,1},...,d_{n,k_n}; c) \x \prod_{j=1}^n A_{d_{j,k_j}}
\ar[r]  & A_c}
\]
\item
\emph{Symmetry}: For each $\sigma \in \Sigma_n$, the following diagram commutes, where the vertical map is induced by the $\Sigma_n$-action and permuting the factors of $A$:
\[
\xymatrix{
\O(c_1,...,c_n; c) \x A_{c_1} \x ... \x A_{c_n} \ar[r] \ar[d] &  A_c \\
\O(c_{\sigma(1)},...,c_{\sigma(n)}; c) \x A_{c_{\sigma(1)}} \x ... \x A_{c_{\sigma(n)}} \ar[ur] & }
\]
\item
\emph{Identity}:
The map induced by $1_c\in \O(c,c)$ together with $\O(c;c) \x A_c \to A_c$ is the identity on $A_c$.
\end{itemize}
\qed
\end{definition}

If we have a subset $C' \subset C$, the restriction colored operad $\O_{C'}$ acts on the collection of spaces $\{A_c\}_{c\in C'}$.

\begin{example}
A \emph{planar algebra} as in the work of Jones \cite{JonesPlanarAlg} is an algebra over a certain colored operad.  In fact, planar diagrams form a colored operad called the \emph{planar operad} $\mathcal P$.  The colors $C$ are the even natural numbers, and $\mathcal{P}(c_1,...,c_n; c)$ is the space of diagrams with $n$ holes, $c_i$ strands incident to the $i$-th boundary circle, and $c$ strands incident to the outer boundary circle.  If $A_c$ denotes the space of tangle diagrams in $D^2$ with $c$ endpoints on $\partial D^2$, then the collection $\{ A_c \}_{c\in C}$ is an example of an algebra over $\mathcal{P}$ (a.k.a.~a planar algebra).
\end{example}

\section{A review of Budney's operad actions}
\label{Budney}
\subsection{Budney's 2-cubes action}
The operation of connect-sum of knots is always well defined on isotopy classes of knots.  If one considers \emph{long} knots, one can further define connect-sum (or stacking) of knots themselves, rather than just the isotopy classes.  That is, there is a well defined map
\[
\#\co \K \x \K \to \K
\]
where $\K=\Emb(\R, \R\x D^2)$ is the space of long knots.  If one descends to isotopy classes, this operation is commutative, i.e., $\#$ is \emph{homotopy-commutative}.  See Budney's paper \cite[p.~4, Figure 2]{Budney}  for a beautiful picture of the homotopies involved.  This picture suggests that one can parametrize the operation $\#$ by $S^1 \simeq \C_2(2)$.  Thus it suggests that the little $2$-cubes operad $\C_2$ acts on $\K$.


Budney succeeded in constructing such a 2-cubes action, but to do so,  he had to consider a space of \emph{fat} long knots
\[
\mathrm{EC}(1,D^2) := \{f\co \R^1 \x D^2 \incl \R^1 \x D^2 | \>\> \supp(f)\subset I\x D^2\}
\]
where $\supp(f)$ is defined as the closure of $\{x \in \R^1 \x D^2 | f(x) \neq x\}$.
The notation $\mathrm{EC}(1,D^2)$ stands for (self-)\emph{embeddings} of $\R^1 \x D^2$ with \emph{cubical} support.
This space is equivalent to the space of \emph{framed} long knots, but one can restrict to the subspace where the linking number of the curves $f|_{\R\x(0,0)}$ and $f|_{\R\x(0,1)}$ is zero; this subspace is then equivalent to the space of long knots.

The advantage of $\EC(1,D^2)$ is that one can compose elements.  In the 2-cubes action on this space, the first coordinate of a cube acts on the $\R$ factor in $\R \x D^2$, while the second factor dictates the order of composition of embeddings.  Precisely, the action is defined as follows.  For one little cube $L$, let $L^y$ be the embedding $I\incl I$ given by projecting to the last factor.  Let $L^x$ be the  embedding $I\incl I$ given by projecting to the first factor(s).  Let $\sigma\in \Sigma_n$ be a permutation (thought of as an ordering of $\{1,...,n\}$) such that $L^y_{\sigma(1)}(0) \leq ... \leq L^y_{\sigma(n)}(0)$.  The action
\[
\C_2(n) \x \EC(1,D^2)^n \to \EC(1,D^2)
\]
is given by
\[
(L_1,...,L_n) \cdot (f_1,...,f_n) \mapsto L^x_{\sigma(n)} \circ f_{\sigma(n)} \circ (L^x_{\sigma(n)})^{-1} \circ ... \circ L^x_{\sigma(1)} \circ f_{\sigma(1)} \circ (L^x_{\sigma(1)})^{-1}.
\]

\subsection{The splicing operad}

In the above 2-cubes action, the second coordinate is only used to order the embeddings.  Thus instead of the 2-cubes operad, one could consider an operad of ``overlapping intervals'' $\C_1'$.  An element in  $\C_1'(n)$ is $n$ intervals in the unit interval, not necessarily disjoint, but with an order dictating which interval is 	 above the other when two intervals do overlap.  Precisely, an element of $\C_1'(n)$ is an equivalence class $(L_1,...,L_n, \sigma)$ where each $L_i$ is an embedding $I\incl I$ and where $\sigma\in \Sigma_n$.  Elements  $(L_1,...,L_n, \sigma)$ and  $(L_1',...,L_n', \sigma')$ are equivalent if $L_i=L_i'$ for all $i$ and if whenever $L_i$ and $L_j$ intersect, $\sigma^{-1}(i) \leq \sigma^{-1}(j) \Leftrightarrow (\sigma')^{-1}(i) \leq (\sigma')^{-1}(j)$.  It is not hard to see what the structure maps for the operad are (and they are given in Budney's paper \cite{BudneySplicing}).  Budney then easily recasts his 2-cubes action as an action of the overlapping intervals operad $\C_1'$.

The splicing operad $\SC_1^{D^2}$ (which we abbreviate for now as $\SC$) is formally similar to the overlapping intervals operad, in that $\SC(n)$ consists of equivalence classes of elements $(L_0,L_1,...,L_n, \sigma)$ with the same equivalence relation as for $\C_1'$.  In the splicing operad, however, $L_0$ is in $\EC(1,D^2)$, $L_1,...,L_n$ are embeddings $L_i\co I\x D^2 \incl I \x D^2$, and all the $L_i$ are required to satisfy a ``continuity constraint,'' as follows.  One considers $\sigma \in \Sigma_n$ as an element of $\Sigma_{n+1}=\Aut\{0,....,n\}$ which fixes 0.  If $\sigma^{-1}(i) < \sigma^{-1}(k)$ one can think of $L_i$ as inner (or first in order of composition) with respect to $L_k$.  One wants the ``round boundary'' of $L_k$ not to touch $L_i$, but for the operad to have an identity element, one needs to allow for $L_k$ to be flush around $L_i$.  The precise requirement needed is that for $0 \leq \sigma^{-1}(i) < \sigma^{-1} (k)$
\[
\overline{\im L_i \setminus \im L_k} \cap L_k(\overset{\circ}{I} \x \d D^2)= \emptyset.
\]
Note that $\SC$ is a much ``bigger'' operad than $\C_1'$.  One can think of $L_0$ as the ``starting (thickened long) knot'' for the splicing operation and of the other $L_i$ as $n$ ``hockey pucks'' with which one grabs $L_0$ and ties up into $n$ knots.  This gives a map
\[
\SC(n) \x \K^n \to \K
\]
which will define the action of the splicing operad on $\K$.  To fully construct $\SC$ as an operad, one needs the operad structure maps, which also come from the map above.  Roughly speaking, the structure maps are as follows.  Given one splicing diagram with $n$ pucks and $n$ other splicing diagrams each with $k_i$ pucks ($i=1,...,n$), put the $i^\mathrm{th}$ splicing diagram into the $i^\mathrm{th}$ puck by composing the ``starting knots'' $L_0$ and ``taking the pucks along for the ride.''  For a precise definition and pictures, the reader may either consult \cite{BudneySplicing} or read the next Section, which closely follows Budney's construction and subsumes the splicing operad.

\section{The infection colored operad}
\label{InfectionOperad}
\begin{definition}
\label{TrivialStringLink}
Fix for each $c=1,2,3,...$ a trivial $c$-component fat string link
\[
i_c\co \coprod_c I\x D^2 \incl I\x D^2.
\]
with image denoted $S_c := \im (i_c) \subset I\x D^2$.
\end{definition}
We will be more concerned with this image of the fixed trivial fat string link rather than the embedding itself.

A convenient way of choosing an $i_c$ is to fix an embedding $\coprod_c D^2 \incl D^2$ and then take the product with the identity map on $I$.  For $c\geq 2$, we choose an embedding which takes the centers of the $c$ $D^2$'s to the points $x_1,...,x_c$ from our definition (\ref{StringLink}) of string links.  Beyond that, we remain agnostic about this fixed embedding.  For $c=1$, we choose $i_1$ to be the identity map.  This will recover Budney's splicing operad from our colored operad when all the colors are 1.


Now we define the space of $c$-component fat string links to be
\[
\FSL_c := \{ f\co \coprod_c I\x D^2 \incl I\x D^2 |\> \mbox{$f$ agrees with $i_c$ on $\d I \x D^2$}\}.
\]
These are the spaces on which the infection colored operad will act.
An element of $\FSL_3$ is displayed in Figure \ref{fatstringlink}. By our condition on the fixed trivial fat string link, we can restrict $f$ to the cores of the solid cylinders to obtain an ordinary string link $f |_{I \x \{x_1,...,x_c\}}$ as in Definition \ref{StringLink}.  \\

\begin{figure}[h!]
\begin{picture}(216,130)
\put(52,0){\includegraphics[scale=1.3]{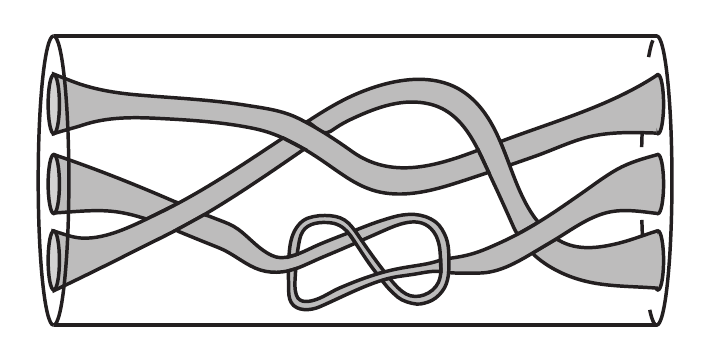}}
\end{picture}
\caption{A fat string link, or more precisely, an element of $\mathrm{FSL}_3$.}
\label{fatstringlink}
\end{figure}

\subsection{The definition of the infection colored operad}

We now define our colored operad $\I=(\I, C)$.  We put $C=\N^+$, so each color $c$ is a positive natural number.

\begin{definition}[\emph{The spaces in the colored operad $\I$}]
\label{InfectionSpaces}
An \emph{infection diagram} is a tuple $(L_0, L_1,...,L_n, \sigma)$ with $L_0 \in \FSL_c$, $\sigma\in \Sigma_n$, and $L_i$ an embedding $L_i\co I\x D^2 \incl I \x D^2$ (for $i=1,...,n$) satisfying a certain continuity constraint.  The constraint is that if $0 \leq \sigma^{-1}(i) < \sigma^{-1}(k)$, then
\begin{align*}
\overline{L_i(I \x D^2) \setminus L_k(S_{c_k})} \cap L_k(\overset{\circ}{I} \x (D^2\setminus \overset{\circ}{S_{c_k}})) = \emptyset & & (\dagger)
\end{align*}
where $S_{c_k}$ is the image of a fixed trivial string link, as in Definition \ref{TrivialStringLink}.
As in the splicing operad, we think of $\sigma\in \Sigma_n$ as a permutation in $\Sigma_{n+1}=\mathrm{Aut}\{0,1,...,n\}$ which fixes 0.

The space $\I(c_1,...,c_n;c)$ is the space of equivalence classes $[L_0,...,L_n, \sigma]$ of infection diagrams, where $(L_0,...,L_n, \sigma)$ and $(L_0',...,L_n', \sigma')$ are equivalent if $L_i=L_i'$ for all $i$, and if whenever the images of $L_i$ and $L_k$ intersect, $\sigma^{-1}(i) \leq \sigma^{-1}(k)$ if and only if $(\sigma')^{-1}(i) \leq (\sigma')^{-1}(k)$.
\qed
\end{definition}

\begin{figure}[h!]
\begin{picture}(325,190)
\put(21,0){\includegraphics[scale=.8]{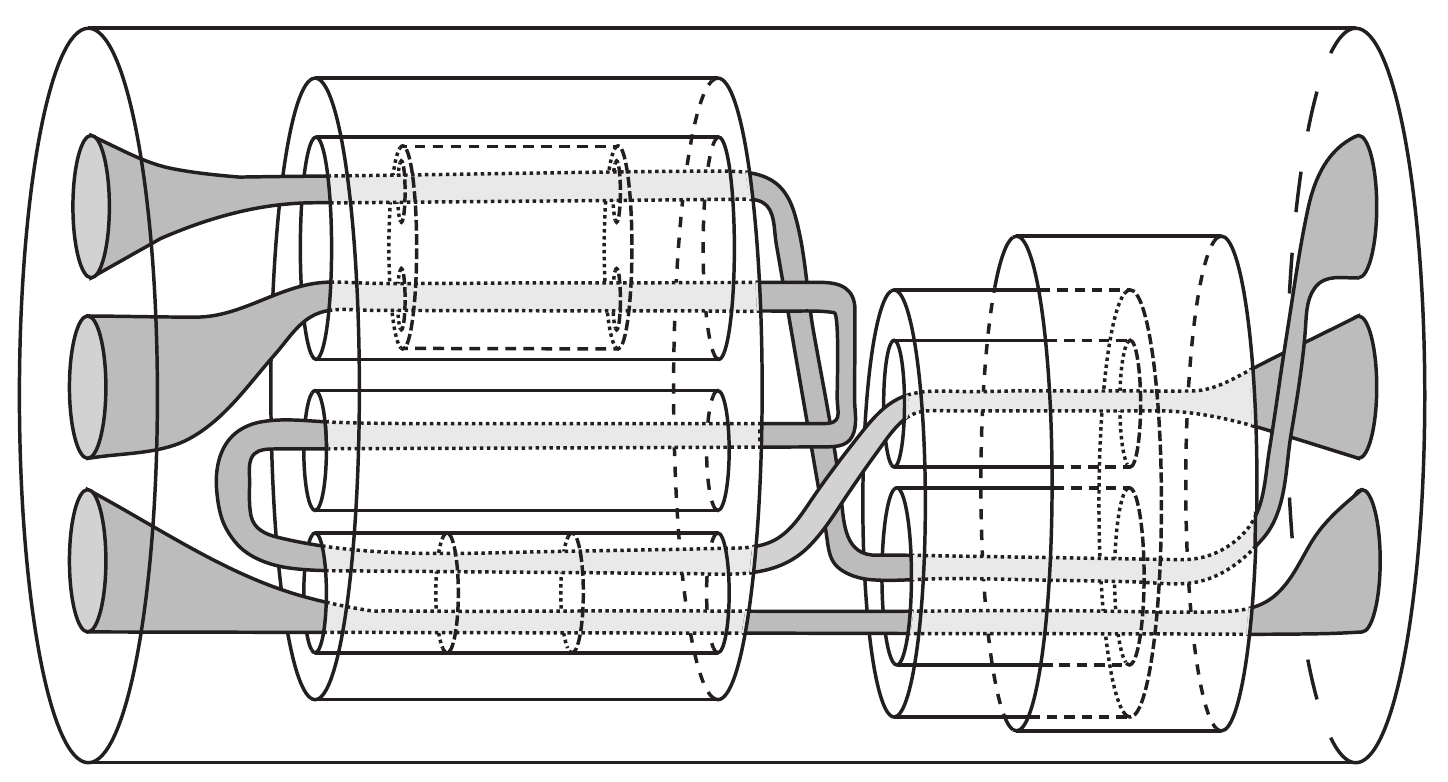}}
\put(61,147){$L_0$}
\put(101,10){$L_4$}
\put(119,21.3){$L_1$}
\put(108,93){$L_2$}
\put(218,7){$L_3$}
\put(306,7){$L_5$}
\put(219,161){$L_4(S_3)$}
\put(162,8){$L_1(S_1)$}
\end{picture}
\caption{An infection diagram, or more precisely, an element of 
$\I(1,2,2,3,1;3)$.
}
\label{infectiondiagram}
\end{figure}

Informally, the $L_i$ are like the hockey pucks in Budney's splicing operad, and the permutation $\sigma$ is a map that sends the order of composition to the index $i$ of $L_i$.  The difference is that instead of re-embedding a hockey puck into itself, we will re-embed the image of $S_{c_i}$, a subspace of thinner inner cylinders, into the puck.  Thus we keep track of the image of $S_{c_i}$, and our pucks can be thought of as having cylindrical holes drilled in them, the holes with which we will grab the string link (or long knot) $L_0$.  Following a suggestion of V.~Krushkal, we call the restrictions of the $L_i$ to $(I\x D^2) \setminus \overset{\circ}{S_{c_i}}$ ``mufflers'' (motivated by the picture for $c_i=2$).

The generalization of Budney's continuity constraint to the constraint $(\dagger)$ is the key technical ingredient in defining our colored operad.  The need for this constraint is explained precisely in Remark \ref{ContinuityConstraintRmk} below.  The rough meaning of this condition is that a muffler which acts earlier should be inside a hole of a muffler that acts later; in other words, the ``solid part'' of a higher $L_k$ (which remains after drilling out the trivial string link) should not intersect any part of a lower $L_i$, where ``higher'' and ``lower'' are in the semi-linear ordering determined by $\sigma$.  However, we must allow for the possibility of the boundaries of the mufflers intersecting in certain ways. Figure \ref{mufflercrosssection} displays the cross-section of a set of mufflers which satisfy constraint $(\dagger)$.

\begin{figure}[h!]
\begin{picture}(325,190)
\put(80,0){\includegraphics[scale=.8]{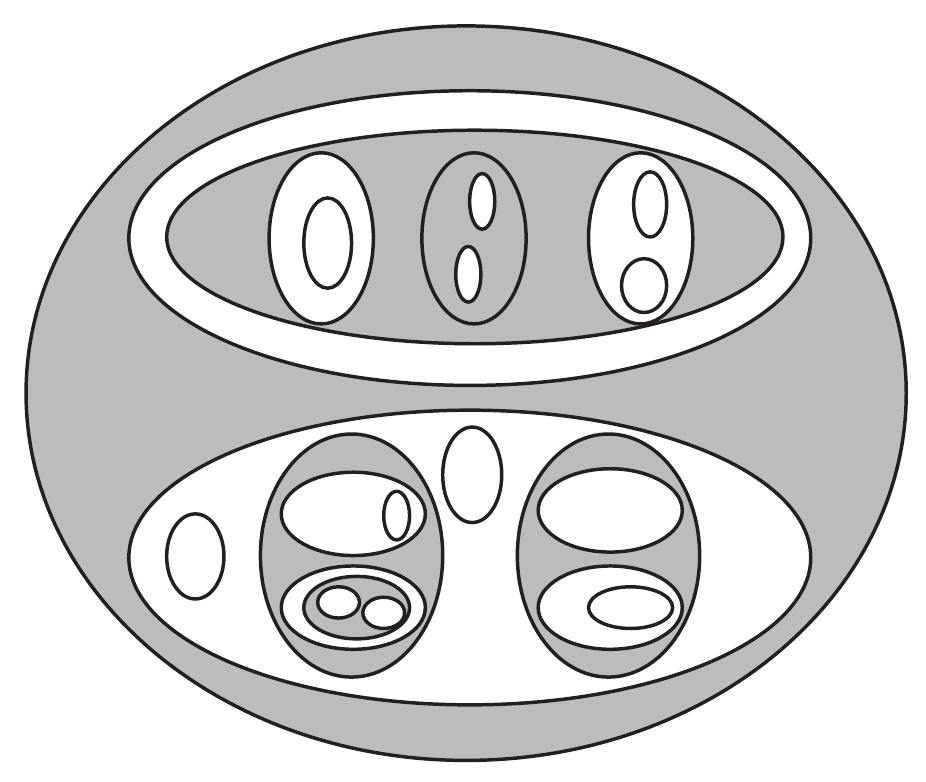}}
\end{picture}
\caption{The cross-section of a set of thirteen mufflers, including seven one-holed mufflers (or hockey pucks), satisfying the constraint $(\dagger)$.  Each grey area is the ``forbidden region'' $L_k(\overset{\circ}{I} \x (D^2\setminus \overset{\circ}{S_{c_k}}))$ of the $k^\mathrm{th}$ muffler, i.e., the region where no other muffler may lie.}
\label{mufflercrosssection}
\end{figure}

So far we haven't finished defining the operad, since we haven't defined the structure maps.  We start by defining the action on the space of fat string links.  Only after that will we define the structure maps and check that they form a colored operad and that the definition below is a colored operad action.

\begin{definition}[\emph{The action of  $\I$ on fat string links}]
\label{InfectionAction}
Consider $[L_0,L_1 ...,L_n,\sigma] \in \I(c_1,...,c_n;c)$ and fat string links $f_1,...,f_n$ where $f_k\in \FSL_{c_k}$.  Let $L_k^{in}$ be the map obtained from $L_k$ by restricting the domain to $S_{c_k}$ and restricting the codomain to its image.  We use the shorthand notation
\[
L_k \cdot f_k \quad \mbox{ to denote the map }  \quad L_k \circ f_k \circ (L_k^{in})^{-1}\co L_k(S_{c_k}) \to I \x D^2.
\]
Then we define
\begin{align*}
\I(c_1,...,c_n; c)\x \FSL_{c_1}\x ... \x \FSL_{c_n} &\to \FSL_c \\
\mbox{by} \qquad \qquad \qquad \qquad \qquad \qquad \qquad \qquad & \\
([L_0,L_1,...,L_n, \sigma], f_1,...,f_n) & \mapsto (L_{\sigma(n)}\cdot f_{\sigma(n)}) \circ ... \circ (L_{\sigma(1)}\cdot f_{\sigma(1)}) \circ L_0 .
\qed
\end{align*}
\end{definition}


\begin{figure}[p]
\begin{picture}(325,520)
\put(25,10){\includegraphics[scale=.7]{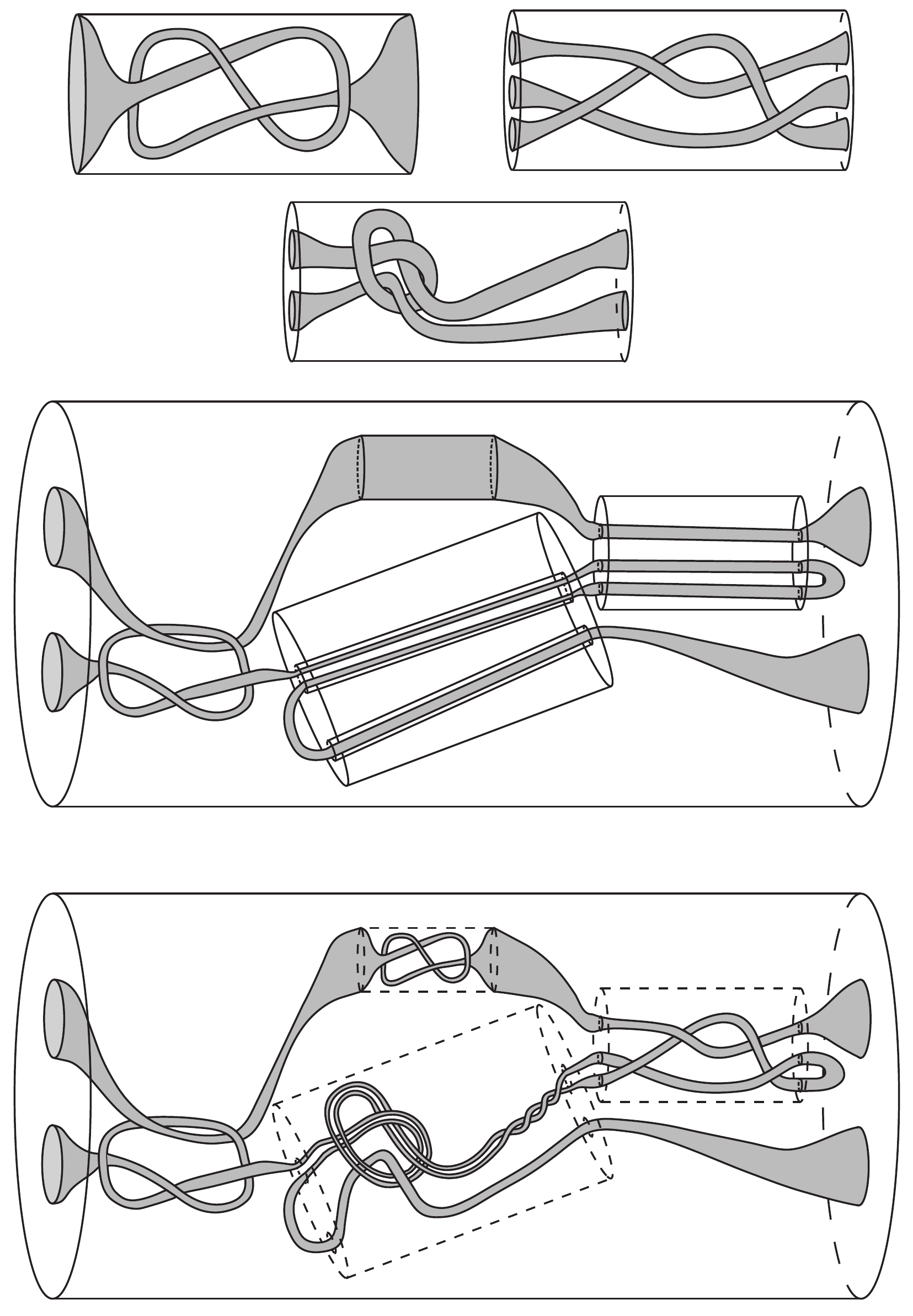}}
\put(39,455){$f_1$}
\put(341,455){$f_2$}
\put(115,382){$f_3$}
\put(55,186){$L_0$}
\put(180,336){$L_1$}
\put(203,215){$L_3$}
\put(275,317){$L_2$}
\put(135,4){$(L_3 \cdot f_3) \circ ... \circ (L_1 \cdot f_1) \circ L_0$}
\put(2015,158){$L_1 \cdot f_1$}
\put(205,35){$L_3 \cdot f_3$}
\put(275,135){$L_2 \cdot f_2$}
\end{picture}
\caption{The result of an element of an infection diagram acting on three fat string links, or more precisely a map $\I(1,3,2;2) \x \FSL_1 \x \FSL_2 \x \FSL_3 \to \FSL_2$. The 2-component fat string link at the bottom is the result of this action. }
\label{action}
\end{figure}

\begin{remark}
\label{ContinuityConstraintRmk}
Strictly speaking, each map $L_{\sigma(k)} \cdot f_{\sigma(k)}$ is only defined on $L_{\sigma(k)}(S_{c_{\sigma(k)}}) = \im L_{\sigma(k)}^{in}$, so one might worry whether the above composition is well defined.  We claim that the conditions on the support of the $f_{\sigma(k)}$ and the continuity constraint ($\dagger$) guarantee that we can continuously extend each $L_{\sigma(k)} \cdot f_{\sigma(k)}$ by the identity on $\im L_0 \setminus \im L_{\sigma(k)}^{in}$.

In fact, first write
\[
\d (\im L_{\sigma(k)}^{in}) = (\d I \x \coprod_{c_k} D^2) \cup (I \x \d \coprod_{c_k} D^2).
\]
Since each $f_{\sigma(k)}$ is the identity  on the $\d I \x \coprod_{c_k} D^2$ part of its domain (the ``flat boundary''), the map $L_{\sigma(k)} \cdot f_{\sigma(k)}$ is the identity on the $\d I \x \coprod_{c_k} D^2$ part of $\im L_{\sigma(k)}^{in}$.

The constraint $(\dagger)$ says that
\[
\overline{\im L_0 \setminus \im L_{\sigma(k)}^{in}}    \cap     L_{\sigma(k)}(\overset{\circ}{I} \x \d (\coprod_{c_k} D^2) ) = \emptyset,
\]
hence
\[
\overline{\im L_0 \setminus \im L_{\sigma(k)}^{in}}    \cap     \im L_{\sigma(k)} \subseteq \d I \x \coprod_{c_k} D^2.
\]
So the continuity constraint guarantees that we don't need to worry about extending past the $I \x \d \coprod D^2$ part of the boundary (the ``round boundary'').

Hence this defines the composition on the whole image of $L_0$.
\qed
\end{remark}

\begin{definition}[\emph{The structure maps in $\I$}]
\label{InfectionMaps}
The structure maps
\begin{equation}
\label{InfectionStructureMap}
\I(c_1,...,c_n;c) \x \I(d_{1,1},...,d_{1,k_1}; c_1) \x ... \x \I(d_{n,1},...,d_{n, k_n}; c_n) \to \I(d_{1,1},...,d^{n,k_n}; c)
\end{equation}
\[
(J_0,...,J_n,\rho) \x (L_{1,0},...,L_{1,k_1}, \sigma_1) \x ... \x (L_{n,0},...,L_{n,k_n}, \sigma_n) \mapsto ((J\cdot \vec{L})_0,(J\cdot \vec{L})_{1,1}, ..., (J\cdot \vec{L})_{n,k_n}, \tau)
\]
are defined as follows.  (Here $\vec{L}= (L_{1,*},...,L_{n,*})$, which can be thought of as $n$ infection diagrams, and $J\cdot \vec{L}$ is just shorthand for the result on the right-hand side.)  The ``starting'' fat string link is
\begin{align*}
(J\cdot \vec{L})_0 & := \left( \bigcirc_{i=1}^n J_{\rho(i)} \cdot L_{\rho(i),0} \right) \circ J_0 \\
&:=  (J_{\rho(n)} \circ L_{\rho(n),0} \circ (J_{\rho(n)}^{in})^{-1}) \circ ... \circ (J_{\rho(1)} \circ L_{\rho(1),0} \circ (J_{\rho(1)}^{in})^{-1}) \circ J_0.
\end{align*}
Given $a\in \{1,...,n\}$ and $b\in \{1,...,k_a\}$, the $(a,b)^{\mathrm{th}}$ puck is
\[
(J\cdot \vec{L})_{a,b} := (\bigcirc_{i=\rho^{-1}(a) +1}^n J_{\rho(i)} \cdot L_{\rho(i),0} ) \circ (J_a \circ L_{a,b})
\]
Finally, the permutation $\tau$ associated to $J\cdot \vec{L}$ is given by
\[
\tau^{-1}(a,b) :=
\tau^{-1} \left(b + \sum_{i=1}^{a-1} k_i \right) :=
\sigma^{-1}_a(b) + \sum_{i=1}^{\rho(a) -1}k_{\rho(i)}.
\]
In other words
\begin{align}
\label{PermutationList}
\tau^{-1} \co (1,1), (1,2),...,(n, k_n) \mapsto (\rho^{-1}(1), \sigma_1^{-1}(1)), (\rho^{-1}(1), \sigma_1^{-1}(2)),..., (\rho^{-1}(n), \sigma_n^{-1}(k_n))
\end{align}
where the set acted on can be thought of as a set of ordered pairs (though not a cartesian product) with a lexicographical ordering as on the left.
\qed
\end{definition}

\begin{figure}[h!]
\begin{picture}(325,230)
\put(25,0){\includegraphics[scale=.7]{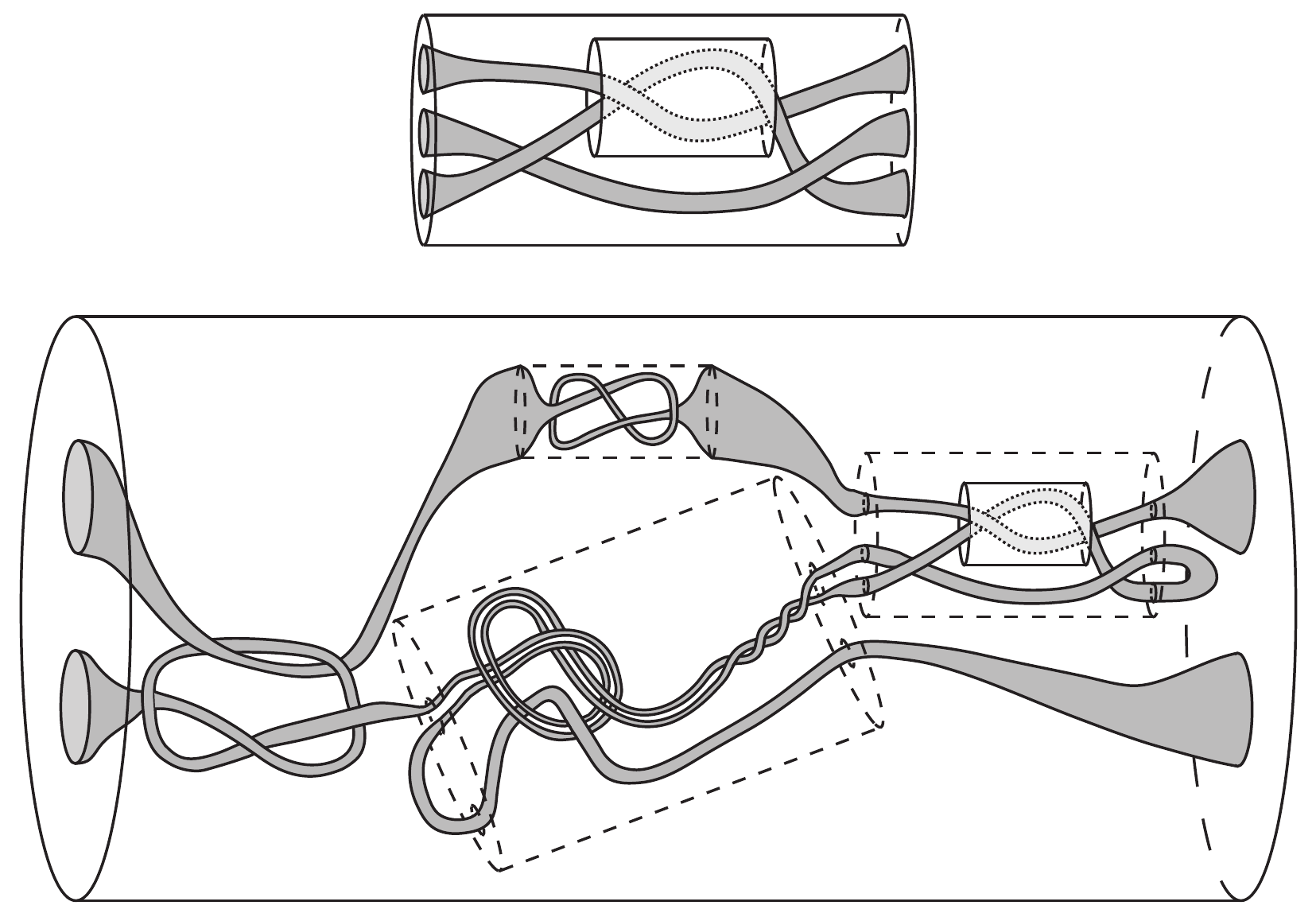}}
\put(122,165){$J_2$}
\put(168,143){$L_1 \cdot J_1$}
\put(207,23){$L_3 \cdot J_3$}
\put(275,121){$L_2 \cdot J_2$}
\end{picture}
\caption{A slight variation of Figure \ref{action}, using the same $(L_0,L_1,L_2,L_3)$ but replacing the fat string link $f_2$ in Figure \ref{action} by the infection diagram $J_2$ shown above, gives an example of the operad structure maps.
The infection diagrams $J_1$ and $J_3$ have zero mufflers, and their 0-th components are respectively $f_1$ and $f_3$.
Thus the picture above is the image of $((L_0,...,L_3),J_1,J_2,J_3)$ under the structure map
$\I(1,3,2; 2) \x \I(\emptyset; 1) \x \I(1;3) \x \I(\emptyset; 2) \to \I(1; 2)$.}
\label{structure}
\end{figure}

Notice that the action maps are just special cases of the structure maps.  In fact, $\FSL_c$ is precisely $\I(\emptyset; c)$ where $\emptyset$ is 0-tuple of positive integers (or the sequence of zero elements).  Thus each action map can be written as
\[
\I(c_1,...,c_n; c) \x \I(\emptyset; c_1) \x ... \x \I(\emptyset; c_n) \to \I(\emptyset; c)
\]
Thus we can make just a slight modification to Figure \ref{action} to produce a picture of a structure map (that is not an action map), as in Figure \ref{structure}.


\begin{theorem}
\label{DefnThm}
\begin{itemize}
\item[(A)]
The spaces and maps in Definitions \ref{InfectionSpaces} and \ref{InfectionMaps} make $\I$ a colored operad with an action on the space of fat string links given by Definition \ref{InfectionAction}.
\item[(B)]
When restricting to the single color $c=1$, one recovers Budney's splicing operad $\mathcal{SC}_1^{D^2}$.  Thus $\C_2$ maps to this part of the colored operad.
\item[(C)]
There is a map of the little intervals operad $\C_1$ to the restriction $\I_{\{c\}}$ of $\I$ to \emph{any} single color $c$.
\end{itemize}
\end{theorem}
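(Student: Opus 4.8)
The plan is to treat part (A) as the technical core and parts (B), (C) as specializations of it. Since the action maps are exactly the structure maps of Definition \ref{InfectionMaps} in which every inner diagram lies in $\I(\emptyset; c_i) = \FSL_{c_i}$ (zero mufflers), it suffices to prove that the structure maps are well defined and satisfy the colored-operad axioms of Definition \ref{ColoredOperad}; the action axioms then drop out by restricting to zero mufflers. First I would establish well-definedness in three steps. (i) Each composite $J_{\rho(i)} \cdot L_{\rho(i),0}$ and $J_a \circ L_{a,b}$ extends continuously by the identity to an honest embedding of $I \x D^2$; this is precisely the content of Remark \ref{ContinuityConstraintRmk} applied to the relevant pair of mufflers. (ii) The output tuple again satisfies the continuity constraint $(\dagger)$, which I would check by tracking how the nested compositions transport the forbidden regions $L_k(\overset{\circ}{I} \x (D^2 \setminus \overset{\circ}{S_{c_k}}))$. (iii) The map descends to equivalence classes, i.e.\ its value depends only on the images of the $L_i$ and on the relative order that $\tau$ records on pairs with intersecting images. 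Continuity in the $C^\infty$ topology is automatic, since every formula is built from composition and restriction of smooth maps.

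Next I would verify the three axioms. For \emph{identity}, the classes $1_c = [\,i_c;\,\mathrm{id}\,]$ act trivially because $L \cdot i_c$ is the identity on $\im L^{in}$, by construction of the trivial fat string link. For \emph{associativity}, I would expand both bracketings of a triple composite, using associativity of ordinary composition of embeddings for the starting link $(J\cdot\vec{L})_0$ and for each puck, and then check that the two induced permutations coincide; the latter is the block-permutation bookkeeping of the lexicographic list (\ref{PermutationList}), applied one further level. For \emph{symmetry}, I would compute the effect of an outer $\sigma$ and inner $\tau_i$ on $\tau$ and confirm it equals the block permutation $\sigma_{\vec{k}}$ composed with the $\tau_i$, again a direct comparison of indices in (\ref{PermutationList}). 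I expect this combinatorial bookkeeping to be the main obstacle: the geometry (well-definedness of compositions) is controlled cleanly by $(\dagger)$ and Remark \ref{ContinuityConstraintRmk}, so the remaining difficulty is purely the permutation accounting, which is the part most prone to error.

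For part (B), I would set $c = 1$. Since $i_1 = \mathrm{id}$, we get $S_1 = I \x D^2$, so every muffler is its whole puck, $L_k^{in} = L_k$, and $(\dagger)$ collapses to Budney's constraint $\overline{\im L_i \setminus \im L_k} \cap L_k(\overset{\circ}{I} \x \d D^2) = \emptyset$. The space $\FSL_1$ is exactly $\EC(1,D^2)$, and Definitions \ref{InfectionSpaces}, \ref{InfectionMaps}, \ref{InfectionAction} become verbatim the corresponding data of $\SC_1^{D^2}$; hence $\I_{\{1\}} = \SC_1^{D^2}$ as operads with their actions on $\K$. The map $\C_2 \to \I_{\{1\}}$ is then the composite $\C_2 \to \C_1' \to \SC_1^{D^2}$ of Budney's maps recalled in Section \ref{Budney}.

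For part (C), I would define a map $\C_1(n) \to \I_{\{c\}}(c,\ldots,c;c)$ sending little intervals $(J_1,\ldots,J_n)$ to the class $[\,i_c,\ J_1 \x \mathrm{id}_{D^2},\ \ldots,\ J_n \x \mathrm{id}_{D^2},\ \sigma\,]$, with trivial starting link $i_c$ and $\sigma$ the left-to-right order. Because the $J_i$ have disjoint interiors, the muffler images meet only in flat boundary faces, so $(\dagger)$ holds automatically and the class is independent of $\sigma$. That this is a map of operads follows by unwinding Definition \ref{InfectionMaps} with $L_0 = i_c$ and every muffler a product $J \x \mathrm{id}_{D^2}$: each $J_{\rho(i)} \cdot L_{\rho(i),0} = J_{\rho(i)} \cdot i_c$ is the identity (infection by the trivial string link is trivial), so the new starting link is again $i_c$, while each puck simplifies to $(J\cdot\vec{L})_{a,b} = (J_a \circ J_{a,b}) \x \mathrm{id}_{D^2}$, matching the composite little interval, and $\tau$ reproduces the block structure of composition in $\C_1$. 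Compatibility with identity and symmetry is immediate. Finally, tracing the resulting action through $\FSL_c$ shows it reembeds the $n$ fat string links side by side along the $I$ factor, which is exactly stacking.
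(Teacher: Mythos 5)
Your proposal follows essentially the same route as the paper: reduce the action axioms to the structure-map axioms via $\FSL_c = \I(\emptyset;c)$, show the composed diagram still satisfies $(\dagger)$ (the paper does this by noting that taking images under embeddings commutes with closure, complement, and intersection), verify identity/associativity/symmetry by expansion-and-cancellation plus permutation bookkeeping, recover Budney's operad in part (B) from $i_1 = \mathrm{id}$ collapsing $(\dagger)$ to his continuity constraint, and use the very same map $(a_1,\ldots,a_n) \mapsto [\,i_c, a_1 \x \mathrm{id}_{D^2},\ldots,a_n \x \mathrm{id}_{D^2}, \iota\,]$ for part (C). No gaps; your extra checks (descent to equivalence classes, explicit verification that the $\C_1$ map respects composition) only make explicit what the paper leaves implicit.
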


\begin{proof}
For (A), we can first see that a composed operation (i.e., an infection diagram on the right-hand side  $\I(d_{1,1},...,d_{n,k_n}; c)$ of (\ref{InfectionStructureMap})) satisfies the constraint $(\dagger)$, as follows.  Any two non-disjoint mufflers in the composed diagram are the images of mufflers in some $\I(d_{i,1,}...,d_{i,k_i}; c_i)$ under a composition of embeddings.  But if the constraint $(\dagger)$ holds for $L_i,L_k$, then it holds for the compositions of $L_i, L_k$ with these embeddings, since ``image under an embedding'' commutes with complement, closure, and intersection.

Now we need to check the conditions of (a) associativity, (b) symmetry, and (c) identity for the structure maps.  The corresponding conditions for the action maps will then follow because the action maps are special cases of the structure maps.

(a)
Suppose we have $J=(J_0,...,J_j, \sigma)$, $\vec{L} = ((L_{1,0}, ..., L_{1,\ell_1}, \sigma_1),...,(L_{j,0},...,L_{j,\ell_j}, \sigma_j))$,
and $\vec{M} = ((M_{1,1,0},..., M_{1,1,m_{1,1}}, \tau_{1,1}), ... , (M_{j, \ell_j,0},...,  M_{j, \ell_j,m_{j,\ell_j}}, \tau_{j, \ell_j}))$.  Then $(J\cdot \vec{L}) \cdot {\vec{M}}$ has 0-th component
\[
\left( \bigcirc_{(h,k)= \nu^{-1}(1,1)}^{\nu^{-1}(j, \ell_j)}
\left( \left( \bigcirc_{i=\rho^{-1}(h)+1}^j J_{\rho(i)} \cdot L_{\rho(i),0} \right)
\circ J_h \circ L_{h,k} \right) \cdot M_{h,k,0} \right) \circ
\left( \bigcirc_{i=1}^j J_{\rho(i)} \cdot L_{\rho(i),0}\right) \circ J_0
\]
where $\nu$ is the permutation for $JL$, and where the order of the terms in the leftmost composition is given by the indices $\nu^{-1}(1,1), \nu^{-1}(1,2), ..., \nu^{-1}(j, \ell_j)$.  On the other hand, $J\cdot (\vec{L} \cdot {\vec{M}})$ has 0-th component
\[
\bigcirc_{i=1}^j J_{\rho(i)} \cdot
\left( \left( \bigcirc_{k=1}^{\ell_{\rho(i)}} L_{\rho(i),\sigma(k)} \cdot M_{\rho(i), \sigma(k),0} \right) \circ L_{\rho(i),0} \right) \circ J_0.
\]
These two expressions agree by cancelling adjacent terms $J_{\rho(i)}$, $(J_{\rho(i)}^{in})^{-1}$ and adjacent terms $L_{\rho(i),0}$, $(L_{\rho(i),0})^{-1}$ in the expression for $((J \cdot \vec{L}) \cdot \vec{M})_0$.  For example, if $J=(J_0, J_1,J_2, \iota), \vec{L} = ((L_{1,0}, L_{1,1}), (L_{2,0}, L_{2,1}),\iota), \vec{M}=((M_{1,1,0}, M_{1,1,1}), (M_{2,1,0}, M_{2,1,1}), \iota)$ (with $\iota$ denoting the identity permutation), then
\begin{align*}
((J \cdot \vec{L}) \cdot \vec{M})_0
= & [(J \cdot \vec{L})_{2,1} \cdot M_{2,1,0} ] \circ [(J \cdot \vec{L})_{1,1} \cdot M_{1,1,0} ] \circ
[J_2 \cdot L_{2,0}] \circ [J_1 \cdot L_{1,0}] \circ J_0 \\
=& [J_2 \circ L_{2,1} \circ M_{2,1,0} \circ (L_{2,1}^{in})^{-1} \circ \cancel{J_2^{-1}}] \circ \\
& [\cancel{J_2} \circ L_{2,0} \circ (J_2^{in})^{-1} \circ J_1 \circ L_1 \circ M_{1,1,0}
 \circ (L_{1,1}^{in})^{-1} \circ \cancel{J_1^{-1}} \circ \cancel{J_2} \circ \cancel{(L_{2,0})^{-1}} \circ \cancel{J_2^{-1}}] \circ \\
&[\cancel{J_2} \circ \cancel{L_{2,0}} \circ \cancel{(J_2^{in})^{-1}}] \circ [\cancel{J_1} \circ L_{1,0} \circ (J_1^{in})^{-1}] \circ J_0 \\
=& [J_2 \cdot (\vec{L} \cdot \vec{M})_{2,1,0}] \circ [J_1 \cdot (\vec{L} \cdot \vec{M})_{1,1,0}] \circ J_0 \\
= & (J \cdot (\vec{L} \cdot \vec{M}))_0
\end{align*}

Checking that the $(a,b,c)^{\mathrm{th}}$ mufflers of these two infection diagrams agree similarly involves cancelling adjacent terms in the expression for $((J \cdot \vec{L})\cdot \vec{M})_{a,b,c}$.  (Also cf. \cite{BudneySplicing}.)

Finally, to check that the permutations for these two infection diagrams agree, note that the inverse of either one is given (with notation as in (\ref{PermutationList})) by $(i,k,h) \mapsto (\rho^{-1}(i), \sigma_i^{-1}(k), \tau_{i,k}^{-1}(h))$.

(b)  We need to check that both diagrams in the symmetry condition in Definition \ref{ColoredOperad} commute for $\O=\I$.  The maps involved consist of permutations of labels on mufflers and labels on infection diagrams.  The commutativity of these diagrams is easily verified.

(c)  The identity $1_c \in \I(c;c)$ is an element $[L_0, L_1, e]$ with $L_0$ the fixed trivial $c$-component fat string link, $L_1$ the identity map on $I\x D^2$, and $e$ the element in $\Sigma_1$.

Part (B) of the theorem follows quickly from our definitions.  One can check that by choosing the identity map for the trivial fat 1-string link, our constraint $(\dagger)$ reduces to Budney's continuity constraint.  The rest of our definitions are then exactly as in Budney's splicing operad.

For part (C), the map $\C_1 \to \I_{\{c\}}$ is easy to construct.  An element of $\C_1(n)$ is $(a_1,...,a_n)$ where each $a_i\co I \incl I$ is the restriction of an affine-linear, increasing map.  The map
$\C_1 \to \I_{\{c\}}$ is given by $(a_1,...,a_n) \mapsto (i_c, a_1 \x id_{D^2},...,a_n \x id_{D^2}, \iota)$ where $i_c$ was the trivial fat $c$-string link, and where $\iota$ is the identity permutation.  (Actually, we could choose any permutation since the mufflers are disjoint.)
\end{proof}

\begin{remark}  For $c\neq 1$, it is clear that $\C_2$ cannot map to the operad $\I_c$, for then connect-sum of string links would be (homotopy-)commutative.  But this is not the case.  For $c\geq 3$, the pure braid group is not abelian, and for $c=2$, the monoid of string links up to isotopy is nonabelian.  The latter result can be deduced either from our recent results on the structure of this monoid \cite{StringLinkMonoid} or from work of Le Dimet in the late 1980's \cite{LeDimet1988} on the group of string links up to cobordism.
\qed
\end{remark}

Just as Budney's fat long knots are equivalent to framed long knots, our fat string links are equivalent to framed string links.
In more detail, given a fat string link $L \in \FSL_c$, we can restrict to the ``cores of the tubes'' to get an ordinary string link $L|(I \x \{x_1,...,x_c\})$.  Thus we have a map $\FSL_c \to \L_c$, which is a fibration, since in general restriction maps are fibrations.  The fiber $\mathrm{Fib}_L$ over $L$ is the space of tubular neighborhoods of $\im L$ which are fixed at the boundaries.  We express such a neighborhood as a map $\eta\co \coprod_c I \x D^2 \to I \x D^2$ and associate to $\eta$ a collection of $c$ loops in $SO(2)$; these are obtained by taking the derivative at $(0,0)$ of the map $\{t\} \x D^2 \to I \x D^2$, for each $t\in \coprod_c I$.  Thus we can map the fiber $\mathrm{Fib}_L$ to $(\Omega SO(2))^c$.  This ``derivative map'' is a homotopy equivalence (by shrinking $\eta$ to a small neighborhood of $\coprod_c I \x \{0\}$).  Since $\Omega SO(2) \cong \Z$, we can write the fibration as
\[
\xymatrix{ \Z^c \ar[r] & \FSL_c \ar[r] & \L_c.
}
\]
For $L \in \FSL_c$, there are $c$ framing numbers $\omega_1,...,\omega_c$, one for each component.  The $j^{\mathrm{th}}$ framing number is given by the linking number of $I_j \x (0,0)$ with $I_j \x (1,0)$, where $I_j$ is the $j^\mathrm{th}$ copy of $I$ in $\coprod_c I$.  The map $\omega_1 \x ... \x \omega_c \co \FSL_c \to \Z^c$ gives a splitting of the above fibration.  Then we consider the product fibration $\Z^c \to \L_c \x \Z^c \to \L_c$ and the map from the above fibration to this one induced by the splitting.  The long exact sequence of homotopy groups for a fibration together with the five lemma imply that the map from $\FSL_c$ to $\L_c \x \Z^c$  is a weak equivalence, hence a homotopy equivalence.  Thus $\widehat{\L}_c := (\omega_1 \x...\x \omega_c)^{-1}\{(0,0,...,0)\}$ is equivalent to $\L_c$.

\begin{corollary}
By restricting to the subspaces $\widehat{\L}_c\subset \FSL_c$ of fat string links with zero framing number in every component, we obtain an action of $\I$ on spaces homotopy-equivalent to the spaces of $c$-component string links.
\end{corollary}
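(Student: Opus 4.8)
The homotopy equivalence $\widehat{\L}_c \simeq \L_c$ is already in hand from the discussion preceding the statement, so the only new content is to promote the action of $\I$ on $\FSL_c$ from Theorem \ref{DefnThm} to an action on the subspaces $\widehat{\L}_c$. The plan is to show that the framing numbers behave additively under the action and to use this to correct the action on $\FSL_c$ so that it lands in zero framing. First I would record the framing-zeroing map $r_c \co \FSL_c \to \widehat{\L}_c$ that inserts $-\omega_j(f)$ full twists into the $j^{\mathrm{th}}$ tube of $f$; since each framing number $\omega_j$ is locally constant, this is continuous, and it restricts to the identity on $\widehat{\L}_c$.

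The key lemma is a framing-additivity statement: for $[L_0,\dots,L_n,\sigma] \in \I(c_1,\dots,c_n;c)$ and inputs $f_k \in \FSL_{c_k}$, the $j^{\mathrm{th}}$ framing number of the output equals $\omega_j(L_0)$ plus a sum of framing numbers of the $f_k$ routed through the $j^{\mathrm{th}}$ component, and altering the framing of any input changes only the framing of the output, not its underlying string link. The crucial geometric point is that the twisting of a muffler $L_k$ contributes nothing, because the action conjugates: in $L_k \cdot f_k = L_k \circ f_k \circ (L_k^{in})^{-1}$ the twist introduced by $L_k$ is exactly undone by $(L_k^{in})^{-1}$. (Indeed, when $f_k$ is trivial, $L_k \cdot f_k$ is literally the identity on $L_k(S_{c_k})$, no matter how $L_k$ twists.) Since a full twist of a tube moves only the pushoff $I_j \x (1,0)$ and fixes the core $I_j \x (0,0)$, a framing change never affects the underlying string link, which gives the second clause of the lemma.

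With this lemma I would define the action of $\I$ on $\{\widehat{\L}_c\}$ to be the action on $\FSL_c$ followed by $r_c$; on the nullary operations this is simply $r_c \co \I(\emptyset;c) = \FSL_c \to \widehat{\L}_c$. The associativity, symmetry, and identity axioms then reduce to those of the $\FSL_c$-action from Theorem \ref{DefnThm} together with the lemma: because the underlying string link of an output does not see the framings of the inputs, inserting the zeroing maps $r_{c_i}$ at intermediate stages does not change the underlying string link of the final output, so both ways around the associativity square produce the same underlying string link carrying zero framing. Equivalently, one may split $\FSL_c \simeq \L_c \x \Z^c$ and observe that the action respects this product, acting by an honest operad action on the $\L_c$ factor and additively on the $\Z^c$ framing factor.

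I expect the main obstacle to be the precise proof of framing-additivity. Concretely, one must track how each core $I_j \x (0,0)$ and its pushoff $I_j \x (1,0)$ are routed through the composite $\bigcirc_k (L_{\sigma(k)} \cdot f_{\sigma(k)}) \circ L_0$, verify that the conjugation in each $L_k \cdot f_k$ genuinely absorbs the twisting of $L_k$, and check that the closures used to compute the relevant linking numbers are compatible with this routing so that the contributions really add. Once this additivity is pinned down, the compatibility of the $r_c$-corrections with the operad structure is formal.
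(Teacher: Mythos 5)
Your proposal and the paper part ways at the very first step. The paper applies no correction whatsoever: it reads the Corollary off from Theorem \ref{DefnThm} by literally restricting the action maps to the subspaces $\widehat{\L}_c$, all of the stated content being the fibration argument (which you rightly take as given) identifying $\widehat{\L}_c \simeq \L_c$. You instead postcompose with a framing-zeroing retraction $r_c$, and everything hinges on your framing-additivity lemma. That lemma is false, in both clauses. Framings are not additive under infection: if the tube of $L_0$ passes algebraically $w$ times through the hole of a hockey puck, plugging a knot of framing $\phi$ into that puck changes the output framing by $w^2\phi$; and if one component of $L_0$ passes once through each hole of a two-holed muffler, plugging in a $2$-string link whose components are zero-framed but have linking number $\lambda$ (a clasp) changes the output framing by $\pm 2\lambda$, the cross-terms pairing the core through one hole with the pushoff through the other hole contributing $\pm\lambda$ each. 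No formula of your shape can hold, since in the clasp example every framing on the right-hand side vanishes while the left-hand side is $\pm 2$. Worse, altering an input's framing can change the underlying string link of the output, not just its framing: take the type (B) puck from the proof of Proposition \ref{CommRelations}, through which both tubes of the trivial fat $2$-string link pass, and plug in the unknot with framing $1$ versus framing $0$; the outputs have linking numbers $1$ and $0$ and so are not isotopic. Your heuristic that a twist ``moves only the pushoff and fixes the core'' fails because the infected strands sit at arbitrary points of the disk cross-section of the input's tube, not at its core, so a full twist braids them about one another.

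The second failure is fatal to your construction. Let $g\in\FSL_1$ be the framing-$(\pm 2)$ unknot produced from zero-framed data by the clasp example. Feeding $g$ versus $r_1(g)$ into the type (B) puck yields fat $2$-string links with linking numbers $\pm 2$ and $0$ respectively; since $r_2$ adjusts self-framings but never linking numbers, the two sides of the associativity square for $r\circ\mathrm{act}$ lie in different path components of $\FSL_2$, so $r\circ\mathrm{act}$ is not an operad action. The same example refutes your closing remark that the action respects the splitting $\FSL_c\simeq\L_c\times\Z^c$, acting additively on the $\Z^c$ factor: framing data of inputs leaks into the $\L_c$ coordinate of outputs. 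Finally, your computation, done correctly, does expose a point the paper passes over in silence: by the clasp example the action maps of $\I$ do \emph{not} carry $\prod_i\widehat{\L}_{c_i}$ into $\widehat{\L}_c$ (and an infection diagram whose $L_0$ has nonzero framing, or a nullary element of $\I(\emptyset;c)=\FSL_c$ with nonzero framing, gives an even simpler failure of restriction). So a careful statement must constrain the infection diagrams and inputs --- for instance requiring vanishing linking numbers along with vanishing framings, in the spirit of the subspace $\L^0_2$ used in Section \ref{Decomp} --- rather than rely either on the paper's silent restriction or on your retraction.
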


\subsection{Mufflers, rational tangles, and pure braids}
\label{RationalTangles1}
We now briefly discuss how general an infection our operad $\I$ encodes.  Informally, one might wonder how twisted the inner cylinders (i.e., the holes) $L^{in}$ of a muffler could be.  Clearly, a fat string link can appear as $L^{in}$ if and only if the pair $(I\x D^2, L^{in})$ is homeomorphic to the pair $(I\x D^2, i_c)$ where $i_c$ the trivial fat $c$-string link.  The purpose of the following well known Proposition is just to show an alternative and perhaps more intuitive way of thinking about such string links.  Recall from Definition \ref{TrivialStringLink} that $S_c$ is the image of $i_c$.

\begin{proposition}
\label{RationalTFAE}
The following are equivalent:
\begin{itemize}
\item[(1)]
There is a
diffeomorphism of pairs $\xymatrix{(I \x D^2, S_c) \ar[r]^-\cong&  (I \x D^2, \im(L)).}$
\item[(2)]
There is an isotopy from $L$ to the trivial link which takes $\d (\coprod_c I)$ into
$\d (I \x D^2)$.  Note that the isotopy need not \emph{fix} the endpoints of $\coprod_c I$.
\end{itemize}
\end{proposition}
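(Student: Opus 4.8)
The plan is to prove the two implications separately, using the standard equivalence between ambient isotopies and one-parameter families of diffeomorphisms (the isotopy extension theorem). The statement asserts that the pair $(I \x D^2, \im(L))$ is diffeomorphic to the trivial pair if and only if $L$ can be isotoped to the trivial string link by an isotopy permitted to move the endpoints within $\d(I \x D^2)$. The key technical tool is that diffeomorphisms of $I \x D^2$ and ambient isotopies carry the same information once one remembers how the boundary is treated.

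For the direction $(2) \Rightarrow (1)$, I would start with the ambient isotopy $H_t \co I \x D^2 \to I \x D^2$ realizing the isotopy from $L$ to the trivial link $i_c$. Concretely, an isotopy of $L$ taking $\d(\coprod_c I)$ into $\d(I \x D^2)$ extends, by the isotopy extension theorem, to an ambient isotopy $H_t$ of the whole cylinder $I \x D^2$ with $H_0 = \id$. The time-one map $H_1$ is then a diffeomorphism of $I \x D^2$ carrying $\im(L)$ to $\im(i_c) = S_c$ (or its inverse carries $S_c$ to $\im(L)$), which is exactly the diffeomorphism of pairs required in (1). The only subtlety is that the isotopy is allowed to move the boundary points of the intervals, so one must use the version of the isotopy extension theorem for isotopies that are not fixed on $\d(I \x D^2)$; since the endpoints stay in $\d(I \x D^2)$, the ambient isotopy can be taken to preserve the boundary $\d(I \x D^2)$ setwise.

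For the harder direction $(1) \Rightarrow (2)$, suppose $\Phi \co (I \x D^2, S_c) \xrightarrow{\cong} (I \x D^2, \im(L))$ is a diffeomorphism of pairs. Since $\Phi$ restricts to a self-diffeomorphism of $I \x D^2$, and $\Diff(I \x D^2)$ (allowing motion of the boundary, but preserving $\d(I \x D^2)$ as a set) is path-connected, I would choose a path $\Phi_t$ in $\Diff(I \x D^2)$ from $\Phi_0 = \id$ to $\Phi_1 = \Phi$. Then $t \mapsto \Phi_t(S_c) = \Phi_t \circ i_c$ gives an isotopy from the trivial link $i_c$ to $L$; reading this backwards yields the desired isotopy from $L$ to the trivial link, and since each $\Phi_t$ preserves $\d(I \x D^2)$ setwise, the endpoints $\d(\coprod_c I)$ are carried into $\d(I \x D^2)$ throughout.

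The \textbf{main obstacle} is justifying the path-connectedness of the relevant diffeomorphism group in the $(1) \Rightarrow (2)$ direction. The cylinder $I \x D^2 \cong D^3$ is a three-ball, so its group of diffeomorphisms — even allowing the boundary to move, provided $\d(I\x D^2)$ is preserved setwise — is path-connected; this is the essential input, and it is where one must be careful about which boundary conditions are imposed (and it is exactly why condition (2) is phrased to allow motion of the endpoints rather than fixing them). Because the Proposition is labeled well known, I would cite the relevant standard facts (isotopy extension and connectivity of $\Diff(D^3)$, e.g. Cerf/Hatcher) rather than reprove them, and spend the bulk of the argument making the translation between ``diffeomorphism of pairs'' and ``ambient isotopy'' precise and compatible with the boundary condition on the endpoints.
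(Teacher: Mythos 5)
Your proposal is correct and follows essentially the same route as the paper: the direction $(2)\Rightarrow(1)$ is the isotopy extension theorem, and $(1)\Rightarrow(2)$ rests on connectivity of the diffeomorphism group of the $3$-ball, which is exactly the paper's use of Cerf's theorem (the paper phrases this via $\Diff(D^3,\d D^3)$ after isotoping the diffeomorphism to be the identity outside a ball, while you work with the full group $\Diff(I\x D^2)$ — a cosmetic difference, though strictly one should restrict to orientation-preserving diffeomorphisms there, a point the paper also elides).
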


\begin{proof}

(1) $\Rightarrow$ (2):
Suppose we have a
diffeomorphism of pairs $h$ as in (1).
It suffices to show that the identity can be connected to this diffeomorphism by a path of diffeomorphisms of $I \x D^2$, for then we can restrict to $S_c$ to obtain the desired isotopy.

By Cerf's Theorem \cite{Cerf}, the space of diffeomorphisms of $S^3$ is connected.  As a corollary, so is the space of diffeomorphisms of $D^3$ whose values and derviatives agree with the identity on the boundary.  In fact, this follows by considering the fibration
\[
\mathrm{Diff}(D^3,\partial D^3) \to \mathrm{Diff}(S^3) \to \mathrm{Emb}(D^3, S^3)
\]
given by restricting to a hemisphere of $S^3$.  The base space is homotopy-equivalent to $SO(3)$, which is connected, while the fiber is the space of diffeomorphisms of $D^3$ fixed on the boundary.

Now a diffeomorphism $\phi\co (I \x D^2, S_c) \overset{\cong}{\to} (I \x D^2, \im(L))$ is clealry isotopic to one that is the identity outside of a ball $D^3$ contained in $I \x D^2$.  Combining this with Cerf's Theorem, we get a path from $\phi$ to the identity, as desired.

(2) $\Rightarrow$ (1): by the isotopy extension theorem (see for example Hirsch's text \cite{Hirsch}), an isotopy as in (3) can be extended to a diffeotopy of the whole space $I\x D^2$.  The diffeotopy at time 1 then gives the desired diffeomorphism.
\end{proof}

The 2-string links which satisfy the above condition(s) are by definition precisely the 2-string links which are also \emph{rational 2-tangles}.  (Here we consider only string links, not arbitrary tangles; the reader may consult the work of Conway \cite{Conway} for more details about rational tangles in general.)  Note that pure braids are examples of rational 2-tangles, since it is easy to see that a pure braid satisfies (2) above.
We immediately have the following result, which informally says that ``a muffler can grab the string link in the shape of 
any rational 2-tangle:''


\begin{proposition}
A fat 2-string link $L^{in}$
\[
\xymatrix{
S_c \cong \coprod_c I \x D^2 \ar@{^(->}[r]^-{L^{in}} & I \x D^2
}
\]
extends to a diffeomorphism $L$ of $I\x D^2$ if and only if the core $L^{in}|(I\x \{x_1,x_2\})$ of $L^{in}$ is a rational tangle.
\qed
\end{proposition}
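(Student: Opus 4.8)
The plan is to derive both implications from Proposition \ref{RationalTFAE}, treating the forward direction as essentially immediate and concentrating the real work on the backward direction, where one must upgrade a diffeomorphism of \emph{pairs} to a diffeomorphism that restricts to the given embedding $L^{in}$ exactly. The one genuinely substantive point, which I would flag in advance as the main obstacle, is that $L^{in}$ carries a \emph{framing}, and Proposition \ref{RationalTFAE} only controls the underlying pair; so the crux is to show that no framing over a rational core obstructs the extension.

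For ($\Rightarrow$), suppose $L^{in}$ is the restriction to $S_c$ of a diffeomorphism $L$ of $I\x D^2$. Then $L$ is itself a diffeomorphism of pairs $(I\x D^2, S_c)\to (I\x D^2, \im L^{in})$, which is condition (1) of Proposition \ref{RationalTFAE} applied to the fat string link $L^{in}$. Hence condition (2) holds, so the core is isotopic to the trivial link through an isotopy carrying $\d(\coprod_c I)$ into $\d(I\x D^2)$; by the discussion following Proposition \ref{RationalTFAE}, this says precisely that the core is a rational tangle. (Equivalently and more directly: any diffeomorphism of $I\x D^2$ is isotopic to the identity, and restricting such an isotopy to the cores of $S_c$ exhibits the core as isotopic to the trivial tangle, with endpoints staying in $\d(I\x D^2)$ because each stage is a diffeomorphism of $I\x D^2$.)

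For ($\Leftarrow$), suppose the core $K:=L^{in}|(I\x\{x_1,x_2\})$ is a rational tangle. By Proposition \ref{RationalTFAE}, condition (2) gives condition (1): there is a diffeomorphism $h$ of $I\x D^2$ carrying $S_c$ onto a tubular neighborhood of $K$, so that $h|_{S_c}$ is a fat string link whose core is isotopic to $K$. This $h|_{S_c}$ need not equal $L^{in}$: the framings may differ, and the two tubular neighborhoods may differ by an isotopy. I would first correct the framing. A change of the $j$-th framing by $\pm 1$ is realized by a ``damped spin'' diffeomorphism of $I\x D^2$ supported in a collar $N_j'$ of the $j$-th tube $S_{c,j}$, of the form $(t,z)\mapsto (t,\,e^{2\pi i\,\rho(t)\lambda(|z|)}z)$, where $\rho$ interpolates from $0$ to $1$ along $I$ and $\lambda$ is a bump function equal to $1$ on $S_{c,j}$ and $0$ near $\d N_j'$. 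This map is the identity on $\d N_j'$, so it extends by the identity over the rest of $I\x D^2$, while twisting the embedding of $S_{c,j}$ by one full turn. Precomposing $h$ with a suitable product of these spins yields a diffeomorphism $h'$ of $I\x D^2$ with $h'|_{S_c}$ having core isotopic to $K$ and the same framing as $L^{in}$.

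Finally, $h'|_{S_c}$ and $L^{in}$ are two fat string links agreeing with $i_c$ on $\d I\x D^2$, with isotopic (rational) cores and equal framings. By the fibration $\Z^c\to \FSL_c\to \L_c$ established above, the path components of $\FSL_c$ lying over a fixed component of $\L_c$ are indexed precisely by the framing in $\Z^c$; hence $h'|_{S_c}$ and $L^{in}$ lie in the same path component of $\FSL_c$, i.e.\ there is an isotopy of embeddings $\psi_s\co S_c\incl I\x D^2$ from $\psi_0=h'|_{S_c}$ to $\psi_1=L^{in}$. By the isotopy extension theorem \cite{Hirsch}, $\psi_s$ extends to an ambient isotopy $g_s$ of $I\x D^2$ with $g_0=\id$ and $g_s\circ h'|_{S_c}=\psi_s$; then $L:=g_1\circ h'$ is a diffeomorphism of $I\x D^2$ with $L|_{S_c}=L^{in}$, as desired. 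As anticipated, the heart of the argument is the damped-spin construction, which is exactly what shows the framing poses no additional obstruction beyond rationality of the core.
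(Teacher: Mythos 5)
The paper records essentially no proof here: the proposition is stated with its end-of-proof symbol as an \emph{immediate} consequence of Proposition \ref{RationalTFAE} together with the definition of a rational tangle, with isotopy extension and uniqueness of tubular neighborhoods invoked silently. Your forward direction matches that reading and is correct, except for the parenthetical alternative: it is false that \emph{any} diffeomorphism of $I \x D^2$ is isotopic to the identity (orientation-reversing ones are not); fortunately you do not need that claim. Your instinct that the backward direction is where the work lies --- one must upgrade a diffeomorphism of \emph{pairs} to a diffeomorphism restricting to the given embedding $L^{in}$, and the fat structure (parametrization and framing) is the issue --- is also correct, and your damped-spin construction is a legitimate tool.

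However, there is a genuine gap exactly where that upgrade begins, namely at the assertion that $h|_{S_c}$ ``is a fat string link whose core is isotopic to $K$.'' Condition (1) of Proposition \ref{RationalTFAE} provides a diffeomorphism of pairs with \emph{no control whatsoever on $\d(I \x D^2)$}: $h$ carries the end disks of $S_c$ onto the intersection of the tubular neighborhood of $K$ with $\d(I\x D^2)$, but it may rotate those disks, interchange the two tubes, or exchange the $t=0$ and $t=1$ ends of the cylinder. Consequently $h|_{S_c}$ (and then $h'|_{S_c}$) need not agree with $i_c$ on $\d I \x D^2$, i.e., need not lie in $\FSL_c$ at all; its framing is then undefined, its core is not a string link in the sense of Definition \ref{StringLink}, and your key step --- the appeal to the fibration $\Z^c \to \FSL_c \to \L_c$, which classifies path components only among embeddings with the standard boundary behavior --- does not apply as stated. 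To close the gap you must first normalize $h$ near the boundary (compose, as needed, with the end-for-end flip of $I\x D^2$, with a diffeomorphism of $I\x D^2$ exchanging the two standard tubes, and with damped rotations supported near the ends, so that $h$ restricts to $i_c$ on the end disks of $S_c$); only after that do your framing correction, component argument, and final isotopy-extension step make sense. Alternatively one can bypass $h$ entirely: extend the core isotopy of condition (2) to an ambient diffeotopy, observe that a fat string link lying over the trivial core is isotopic to $i_c$ through embeddings with \emph{free} boundary (twists unwind when the ends are not pinned --- the isotopy version of your damped spin), and finish with isotopy extension. This restructuring also shows that the framing is not an obstruction for a simpler reason than your argument suggests: the extension $L$ is never required to fix $\d(I\x D^2)$.
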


\subsection{Generalizations to other embedding spaces}
 For $j \in \N^+$ and $M$ a compact manifold with boundary, let $\EC(j,M)$ be the space of ``cubical embeddings'' $\R^j \x M \incl \R^j \x M$, that is, all such embeddings which are the identity outside $I^j \x M$.  Budney constructs the actions of the little 2-cubes operad $\C_j$ and the splicing operad $\SC_1^{D^2}$ on the space of long knots as special cases of actions of the operads $\C_j$ and $\SC_j^M$ on $\EC(j,M)$.  Our extension of the splicing operad to string links also gives an extension of the more general splicing operad $\SC_j^M$ to a colored operad acting on spaces of embeddings $I^j \x \coprod_c M \incl I^j \x M$.

For each $c\in \N^+$ fix an embedding
\[
i_c\co \coprod_c I^j \x M \incl \coprod_c I^j \x M
\]
by fixing an embedding $\coprod_c M \incl M$.  Let $S_c$ be the image of $i_c$.

Let
\[
\EC^{\coprod_c}(j,M) := \{ f\co \coprod_c I^j \x M \incl I^j \x M |\> \mbox{$f$ agrees with $i_c$ on $\d I \x M$}\}.
\]

\begin{definition}[\emph{The spaces in the colored operad $\I_j^M$}]
An element in $\I_j^M(c_1,...,c_n;c)$ is an equivalence class of tuples $(L_0, L_1,...,L_n, \sigma)$.  Here $L_0 \in \EC^{\coprod_c}(j,M)$, $\sigma\in \Sigma_n$, and for $i=1,...,n$, $L_i$ is an embedding $L_i\co I^j \x M \incl I^j \x M$ subject to the constraint that for $0 \leq \sigma^{-1}(i) < \sigma^{-1}(k)$
\begin{align*}
\overline{\im L_i \setminus L_k(S_c)} \cap L_k(\overset{\circ}{I^j} \x (M\setminus \overset{\circ}{S_c})) = \emptyset & & 
\end{align*}
Here we think of $\sigma\in \Sigma_n$ as a permutation in $\Sigma_{n+1}=\mathrm{Aut}\{0,1,...,n\}$ which fixes 0.

Tuples $(L_0,...,L_n, \sigma)$ and $(L_0',...,L_n', \sigma')$ are equivalent if $L_i=L_i'$ for all $i$ and if whenever the images of $L_i$ and $L_k$ intersect, $\sigma^{-1}(i) \leq \sigma^{-1}(k)$ if and only if $(\sigma')^{-1}(i) \leq (\sigma')^{-1}(k)$.
\qed
\end{definition}

The structure maps of $\I_j^M$, as well as an action of $\I_j^M$ on the spaces $\{ \EC^{\coprod_c}(j,M)\}_{c\in \N^+}$, can be defined exactly as in the special case where $j=1$ and $M=D^2$.

\section{Decomposing the space of 2-string links using the infection operad}
\label{Decomp}

\subsection{The monoid of 2-string links}
Note that given any monoid $\mathcal{M}$ and subset $\C$ of central elements, the quotient monoid $\mathcal{M}/\C$ is well defined.  We are interested in the monoid $\mathcal{M}=\pi_0 \L_c$ of isotopy classes of $c$-string links, especially for $c=2$.  The units in $\pi_0\L_c$ are precisely the pure braids \cite[Proposition 2.7]{StringLinkMonoid}.
We say that a non-unit $c$-string link $L$ is \emph{prime} if $L=L_1\# L_2$ implies that either $L_1$ or $L_2$ is a unit (pure braid).

\begin{definition}
\label{SplitAndCable}
\begin{itemize}
\item[(1)]
A string link $L$ is \emph{split} if there exists a properly embedded 2-disk $(D, \d D) \incl (I \x D^2, \d(I \x D^2))$ whose image is disjoint from $L$ and such that the two 3-balls into which $D$ separates $I \x D^2$ each contain component(s) of $L$.  Such a 2-disk is called a \emph{splitting disk}.  See Figure \ref{SplitLink}.
\item[(2)]
A \emph{1-strand cable} is a string link $L$ which has a neighborhood $N \cong I \x D^2$ such that $L$  considered as a link in $N$ is a (pure) braid $B$.  In other words, ``all the strands are tied into a  knot.''  We call $\d N \setminus \d(I\x D^2)$ a \emph{cabling annulus} for $L$.
See Figure \ref{Cable}.
\end{itemize}
\end{definition}

\begin{figure}[h!]
\begin{picture}(216,130)
\put(112,0){\includegraphics[scale=.6]{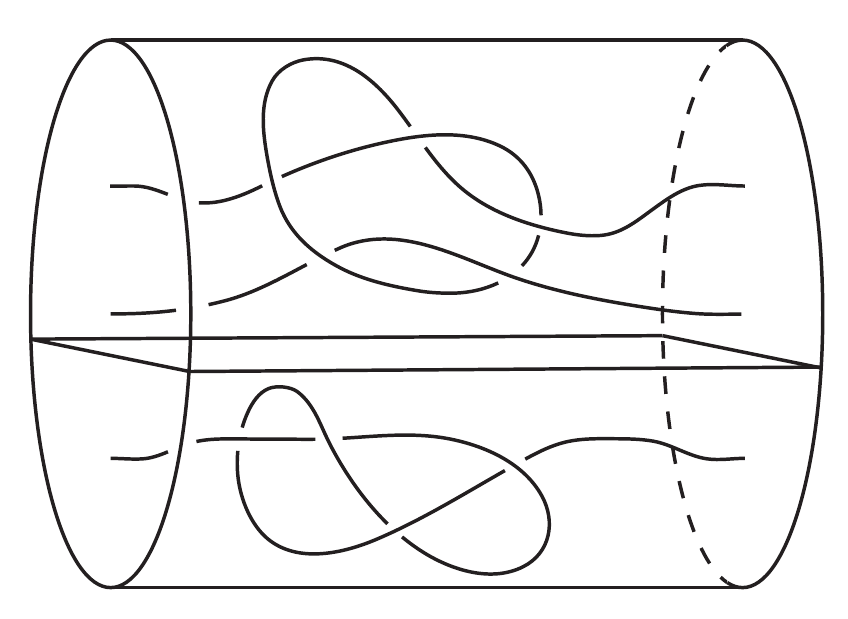}}
\end{picture}
\caption{An example of a split link.}
\label{SplitLink}
\end{figure}

\begin{figure}[h]
\begin{picture}(216,130)
\put(107,0){\includegraphics[scale=.7]{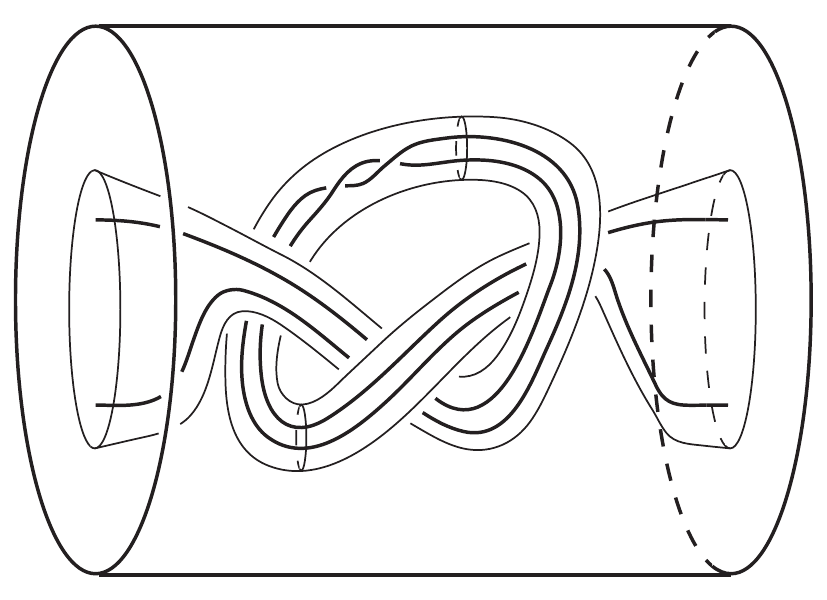}}
\end{picture}
\caption{An example of a 1-strand cable, shown together with a cabling annulus.}
\label{Cable}
\end{figure}


Since we are now focusing on 2-string links, we need not consider (or even define) $k$-strand cables for $k>1$.  Hence we will often refer to 1-strand cables as just \emph{cables}.

\begin{theorem}[proven in \cite{StringLinkMonoid}]
\label{hypo}
The monoid $\pi_0\L_2$ has center $\C$ generated by the pure braids, split links, and 1-strand cables.  The quotient $\pi_0\L_2 / \C$ is free.  Furthermore, every 2-component string link can be written as a product of prime factors
\[
L_1\#...\#L_m \# K_1 \#...\# K_{n-m}
\]
where the $K_i$ are precisely the factors which are in the center.  Such an expression is unique up to reordering the $K_i$ and multiplying any of the factors by units (pure braids).
\end{theorem}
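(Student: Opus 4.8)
The plan is to read a $2$-string link through the topology of its complement $C_L=(I\x D^2)\setminus N(L)$, following Budney's treatment of knots, and to translate the stacking product $\#$ into a gluing of complements along a disk. The key geometric object is a \emph{decomposing disk}: a properly embedded disk in $I\x D^2$ meeting $L$ transversely in two points and separating the cylinder into a lower piece and an upper piece, so that a factorization $L=L_1\#L_2$ is the same datum as such a disk cutting $L$ into $L_1$ below and $L_2$ above. I would first reduce all monoid-theoretic questions to the complement together with its peripheral data (the meridians and longitudes on the genus-$2$ boundary surface), so that isotopy classes of links correspond to homeomorphism classes of these bounded $3$-manifolds with boundary pattern.

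For existence of a prime factorization I would call a non-unit \emph{prime} when every decomposing disk is trivial (boundary-parallel after isotopy) and then induct on a complexity, for instance the number of pieces in the prime and torus (JSJ) decomposition of $C_L$; Haken finiteness guarantees the induction terminates. A nontrivial decomposing disk peels off a strictly simpler factor, so iterating produces an expression $L_1\#\dots\#L_m\#K_1\#\dots\#K_{n-m}$ with each factor prime, the $K_i$ being the central factors.

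The heart of the proof, and the step I expect to be hardest, is \emph{uniqueness}. Given two prime factorizations I would realize each by a disjoint system of decomposing disks and run the standard innermost-disk and cut-and-paste comparison used for connected sums of knots, now upgraded by the \emph{uniqueness} of the JSJ decomposition of $C_L$. The essential new feature over the knot case is that $\#$ is noncommutative, so the matching of the two disk systems must respect the order of the factors, except when a factor can be slid freely past its neighbors; that freedom is exactly centrality, which forces the uniqueness statement to take the stated form---unique up to reordering the central $K_i$ and up to absorbing units into adjacent factors.

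Finally I would pin down the center. Each listed family is central by an explicit isotopy: a pure braid on two strands is a power of the full twist, realized by an ambient rotation of the solid cylinder and hence pushed past any link; a split factor is a pair of local knottings separated by a splitting disk, each of which slides along $I$ past anything just as knot connect-sum commutes; and a $1$-strand cable lies in an embedded sub-cylinder $N\cong I\x D^2$ that can be slid bodily. Conversely, if $L$ is central then its class commutes with every non-central prime, and once uniqueness has shown that $\pi_0\L_2/\C$ is free on the non-central primes (where no reordering is permitted), the only central image is the identity, so $L\in\C$; this simultaneously yields the freeness of the quotient and the asserted generation of $\C$. The main obstacle throughout is disentangling, inside a single JSJ decomposition, the companion tori responsible for cable factors from the essential spheres responsible for split factors, so that the two kinds of central pieces are correctly separated from the rigidly ordered non-central primes.
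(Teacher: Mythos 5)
First, a framing point: this paper does not prove Theorem \ref{hypo} at all---it is imported verbatim from the joint work with Blair \cite{StringLinkMonoid}---so your proposal can only be measured against that cited paper's argument. Your general framework (passing to the complement $C_L$, encoding a factorization $L=L_1\#L_2$ by a decomposing disk, proving existence of prime factorizations by a Kneser--Haken finiteness induction, and verifying centrality of pure braids, split links, and $1$-strand cables by explicit isotopies) is the right family of techniques and is in the same spirit as \cite{StringLinkMonoid}.

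There is, however, a genuine circularity at the crux. You prove the inclusion ``braids, split links, cables $\subseteq$ center'' directly, but for the converse you argue that ``once uniqueness has shown that $\pi_0\L_2/\C$ is free on the non-central primes, the only central image is the identity.'' Freeness of $\pi_0\L_2/\C$ is not available at that point: to establish it you must already know that no reordering of non-central prime factors is possible, i.e., that whenever two prime factors can be commuted past each other, one of them is a braid, a split link, or a cable. That statement---``local commutation forces a split, cable, or braid factor''---is exactly the geometric heart of \cite{StringLinkMonoid}, and your sketch never supplies it; instead you assert that the freedom to slide a factor past its neighbors ``is exactly centrality,'' which both conflates a local property (commuting with adjacent factors in one particular product) with the global property of lying in the center, and assumes the classification you are trying to prove. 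Relatedly, the appeal to JSJ uniqueness does not do this work: the decomposing surfaces here are planar (disks meeting $L$ in two points, equivalently pairs of pants properly embedded in $C_L$), not the tori and annuli along which JSJ decomposes, so JSJ uniqueness does not directly constrain how the disk systems of two factorizations interact. What is needed, and what the cited paper provides, is a direct innermost-disk/cut-and-paste analysis of two decomposing disks realizing different orderings, showing that any failure of rigidity produces a splitting disk, a cabling annulus, or a braid factor; with that lemma in hand, uniqueness, the computation of the center, and freeness of the quotient then follow in the order you describe.
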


\subsection{Removing twists}
\label{RemovingTwists}
Next note that the linking number gives a map $\ell\co \L_2\to \Z$ which descends to a monoid homomorphism $\pi_0\ell\co \pi_0 \L_2 \to \Z$. For $n\in \Z$, let $\L^n_2 = \ell^{-1}\{n\}$.  We might like to think of this as $0 \to \L^0_2 \to \L_2 \to \Z \to 0$, though if we wanted this to be a short exact sequence of monoids, we should instead write
\[
0 \to \pi_0 \L^0_2 \to \pi_0 \L_2 \to \Z \to 0,
\]
since $\L_2$ is only a monoid up to homotopy.  There is an action of $\Z$ on $\L_2$ where the generator $1\in \Z$ acts by following the embedding in $\L_2$ by the map $D^2 \x I \to D^2 \x I$ given by $(z,t) \mapsto (e^{2\pi i t} z, t)$.  The action of any $m\in \Z$ thus gives a continuous map\footnote{We can see that the map does indeed have this codomain because the resulting twisted link can be taken by an isotopy to a link where the twists are on one end, in which case the linking number is clearly increased by  $m$.}  $\L^n_2 \to \L^{n+m}_2$
with continuous inverse given by the action of $-m$.
Thus $\L_2 \cong \L^0_2 \x \Z$, and it suffices to study $\L^0_2$ to understand $\L_2$.  Note that Theorem \ref{hypo} above implies that an element of $\pi_0 \L^0_2$ can be written as a product of primes $L_1\#...\#L_m \# K_1 \#...\# K_{n-m}$ which is unique up to \emph{only reordering the $K_i$}.  We similarly define a subspace $\LL^0_2 \subset \LL_2$ in the space of fat string links with zero framing number; note that $\LL^0_2 \simeq \L^0_2$.

\begin{proposition}
\label{CommRelations}
The isotopies that yield the commutativity relations in $\pi_0\L^0_2$ (which by Theorem \ref{hypo} are \emph{all} the relations in $\pi_0 \L^0_2$) can be realized as paths in the spaces $\I(c_1,...,c_n;2)$, where $c_i \in \{1,2\}$.
\end{proposition}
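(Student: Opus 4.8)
The plan is to use Theorem \ref{hypo} to reduce all of $\pi_0\L_2^0$ to a short list of commutativity relations, and then to realize the geometric isotopy witnessing each such relation as the image, under the action map of Definition \ref{InfectionAction}, of an explicit path of infection diagrams acting on \emph{fixed} inputs. The whole point will be that the commuting isotopies already available (from \cite{StringLinkMonoid}) are supported inside a continuously varying family of legal infection diagrams.

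First I would reduce. By Theorem \ref{hypo} every relation in $\pi_0\L_2^0$ is a consequence of relations $[K]\#[L]=[L]\#[K]$ in which $K$ is one of the central generators (a pure braid, a split link, or a $1$-strand cable, of which in $\L_2^0$ only the linking-number-zero ones occur) and $L$ is arbitrary. Since $K\#(A\#B)=A\#(K\#B)$ and the non-central part of $\pi_0\L_2^0$ is free, it suffices to commute one central generator $K$ past a single prime or central factor $L$. Next I would encode everything operadically: $\#$ is realized through the map $\C_1\to\I_{\{2\}}$ of Theorem \ref{DefnThm}(C); a $1$-strand cable is the value of the action map on a color-$1$ input (a long knot) at a one-holed muffler whose image contains both strands of $S_2$; a split link is the value on two color-$1$ inputs at two disjoint pucks, one about each strand; a pure braid is encoded either as a rational-tangle-shaped muffler via Proposition \ref{RationalTFAE} or directly as a color-$2$ input; and $L$ is a color-$2$ input grabbed by a color-$2$ muffler $M_L$. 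In all cases the inputs are fixed and the color list lies in $\{1,2\}$, as the statement requires. With this bookkeeping both $K\#L$ and $L\#K$ become values of one action map $\I(c_1,\dots,c_n;2)\x\prod_i\FSL_{c_i}\to\FSL_2$ on the \emph{same} inputs, for two infection diagrams $D_0$ and $D_1$ differing only in the $I$-coordinate placement and the semi-ordering $\sigma$ of the $K$-mufflers relative to $M_L$.

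I would then connect $D_0$ to $D_1$ by the ``shrink-and-slide'' isotopy that already proves commutativity of stacking: thin the muffler(s) carrying $K$ into a neighborhood of the strands and translate them in the $I$-direction through the region occupied by $M_L$, switching $\sigma$ as they pass. This gives a path $t\mapsto(L_0,L_1(t),\dots,L_n(t),\sigma(t))$, and applying the action map with the fixed inputs produces an isotopy in $\FSL_2$; passing to $\LL_2^0\simeq\L_2^0$ via the twist-splitting of Section \ref{RemovingTwists} yields the relation in $\pi_0\L_2^0$. The endpoint is identified as $L\#K$ using the rotational (cable) or conjugation (braid) symmetry of $K$, and, for a split link, the fact that its two components travel along independent strands; equivalently, one simply checks that the isotopies of \cite{StringLinkMonoid} stay inside the operad. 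The associativity, symmetry, and identity bookkeeping of Definition \ref{ColoredOperad} are then routine.

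The main obstacle is the passing time, when the images of the $K$-mufflers and of $M_L$ overlap. There one must verify that the tuple remains a \emph{legal} infection diagram, i.e.\ that each $L_i(t)$ stays an embedding and that the continuity constraint $(\dagger)$ of Definition \ref{InfectionSpaces} holds, so that the action map stays continuous and the semi-order never jumps across a wall of the equivalence relation. This is precisely where the centrality of $K$ is used: only the split/cable/braid structure of $K$ makes it possible to slide $K$ along the two strands through $M_L$ while keeping its mufflers out of the forbidden region $M_L(\overset{\circ}{I}\x(D^2\setminus\overset{\circ}{S_2}))$, so that $(\dagger)$ survives the crossing. Pinning down this crossing — including the choice of encoding (color $1$ versus color $2$) for $K$ that makes the nesting against $M_L$'s trivial holes transparent, and confirming that $L_0$ may be held fixed rather than dragged — is the delicate step; everything else follows formally.
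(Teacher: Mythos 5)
Your strategy is in fact the paper's own: reduce via Theorem \ref{hypo} to sliding a central factor past an arbitrary factor, encode split links by thin pucks hugging a single strand, cables by a puck containing both strands, the arbitrary factor by a color-2 muffler of the form $a\x\id$, and realize each commutation as a path of infection diagrams acting on fixed inputs, with the continuity constraint $(\dagger)$ as the only thing to verify during the crossing. But the crossing is exactly the content of the proposition, and the mechanism you propose for it is wrong in the cable case. You assert that centrality lets you slide $K$'s mufflers past $M_L$ ``while keeping [them] out of the forbidden region $M_L(\overset{\circ}{I}\x(D^2\setminus\overset{\circ}{S_2}))$.'' That is impossible for a cable: a puck encoding a cable, e.g.\ $B=a_i\x\id$, has image $a_i(I)\x D^2$, whose cross-section is all of $D^2$ and hence meets the solid part of $M_L$'s cross-section (any connected set containing both disks of $S_2$ does). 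So whenever the $I$-supports of $B$ and $M_L$ overlap, the constraint $(\dagger)$ with $B$ inner and $M_L$ outer fails, and your path leaves the operad.

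The repair --- and this is how the paper argues --- is that the two kinds of central factors cross $M_L$ by two different mechanisms with opposite forced semi-orders. Split-link pucks (of the form $a_i\x e_1$ or $a_i\x e_2$) pass through the \emph{inside} of $M_L$'s holes as \emph{inner} diagrams, and there your ``stay out of the forbidden region'' verification is correct. Cable pucks (of the form $a_i\x\id$) pass $M_L$ on the \emph{outside} as \emph{outer} diagrams: put $M_L$ earlier than $B$ in the semi-order, and then $(\dagger)$ requires instead that $\overline{M_L(I\x D^2)\setminus B(S_1)}$ avoid $B$'s forbidden region. Since $i_1=\id$ gives $S_1=I\x D^2$, that forbidden region is only the round boundary $B(\overset{\circ}{I}\x\d D^2)$, and the condition holds for $B=a_i\x\id$, $M_L=a_k\x\id$ because $\overline{a_k(I)\setminus a_i(I)}$ misses $a_i(\overset{\circ}{I})$. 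Note also that no ``switching of $\sigma$ as they pass'' occurs or is permitted: during an overlap $(\dagger)$ forces one order, and when the pucks are disjoint $\sigma$ is irrelevant by the defining equivalence relation of Definition \ref{InfectionSpaces}, so a single $\sigma$ serves for the whole path. Finally, two of your worries are vacuous: in $\pi_0\L^0_2$ the only pure braid is the trivial one (the 2-strand pure braid group is generated by the full twist, which has linking number $1$), so no braid case is needed; and $L_0$ is the fixed trivial fat 2-string link throughout, so it is indeed never dragged.
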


\begin{proof}
Note that by Theorem \ref{hypo} any 2-string link can be obtained from infections of the trivial 2-string link by prime knots and non-central prime 2-string links; these infections can be chosen to commute with each other (so that they can be carried out ``all at once'').   In terms of fat string links in $\widehat{\L}_2^0 \left(\simeq \L_2^0\right)$, we can express these operations using a relatively small class of 2-holed mufflers and hockey pucks, as follows.

\begin{figure}
\includegraphics[scale=.55]{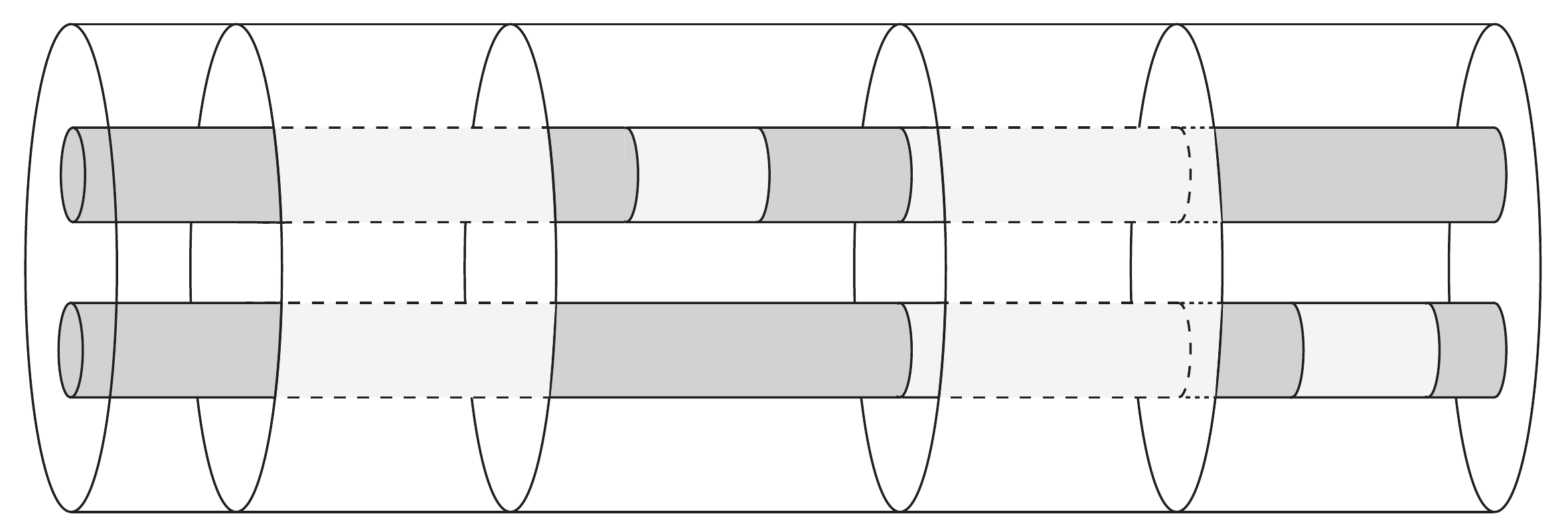}
\caption{An element of $\I(1,1,2,1;2)$, where the $L_i$ are ordered from left to right.  In this element, we have a hockey puck of type (B), then a hockey puck of type (A1), then a two-holed muffler, then a hockey puck of type (A2).}
\label{OperadEncodesCommRelations}
\end{figure}

Recall that an element $a \in \C_1(1)$ is just an affine-linear map $I \incl I$.
Let $e_1, e_2\co D^2 \incl D^2$ denote the restrictions of
the trivial fat 2-string link $i_2\co \coprod_2 I \x D^2 \incl I \x D^2$
to the two components in the 0-time slice:
\[
e_1 \sqcup e_2 \co  (\{0\} \x D^2) \sqcup (\{0\} \x D^2) \incl \{0\} \x D^2.
\]
(Equivalently, $e_1, e_2$ are the restrictions of $i_2$ to the two components of a time-slice at any time $t\in I$).
Consider infection diagrams $(L_0, M_1,...,M_n, \sigma)$ representing classes in $\I(c_1,...,c_n;2)$ which satisfy the following three conditions (see Figure \ref{OperadEncodesCommRelations}):
\begin{itemize}
\item
$L_0$ is the trivial 2-string link.
\item
If $c_i=1$, then either
\begin{itemize}
\item[(A1)]
$L_i = a_i \x e_1$ for some $a_i \in \C_1(1)$ or
\item[(A2)]
$L_i = a_i \x e_2$ for some $a_i \in \C_1(1)$ or
\item[(B)]
$L_i = a_i \x id \co I \x D^2 \incl I \x D^2$ for some $a_i \in \C_1(1)$.
\end{itemize}
\item
If $c_i=2$, then $L_i = a_i \x id \co I \x D^2 \incl I \x D^2$ for some $a_i \in \C_1(1)$.
\end{itemize}

Notice that plugging knots into pucks of types (A1) and (A2) produces a split link, while plugging a knot into a puck of type (B) produces a cable.  Hockey pucks of types (A1) and (A2) can move through the inside of the two-holed mufflers, while the pucks of type (B) can move through the two-holed mufflers on the outside.  These two motions correspond to the centrality of split links and cables, which by Theorem \ref{hypo} are all the commutativity relations in $\pi_0 \L^0_2$.  This proves the proposition.  (Since $\L_2 \cong \L^0_2 \x \Z$, this is fairly close to a statement about all of $\L_2$.)
\end{proof}

\subsection{A suboperad of the 2-colored restriction}
Let $\I_{\{2\}}$ denote the suboperad of $\I$ corresponding to the color $\{2\} \subset \N^+$.  Note that $\I_{\{2\}}$ is an ordinary operad.

\begin{definition}
\label{StackingSuboperad}
We define the \emph{stacking suboperad} $\I_\# \subset \I_{\{2\}}$ as the suboperad
where each space $\I_\#(n)$
consists of elements of $\I(2,...,2;2)$ represented by infection diagrams $(L_0, M_1,...,M_n, \sigma)$  satisfying the following conditions:
\begin{itemize}
\item
$L_0$ is the trivial $2$-string link.
\item
$M_i = a_i \x id \co I \x D^2 \incl I \x D^2$ for some $a_i \in \C_1(1)$.
\end{itemize}
\end{definition}

\begin{figure}
\begin{picture}(216,130)
\put(18,0){\includegraphics[scale=0.55]{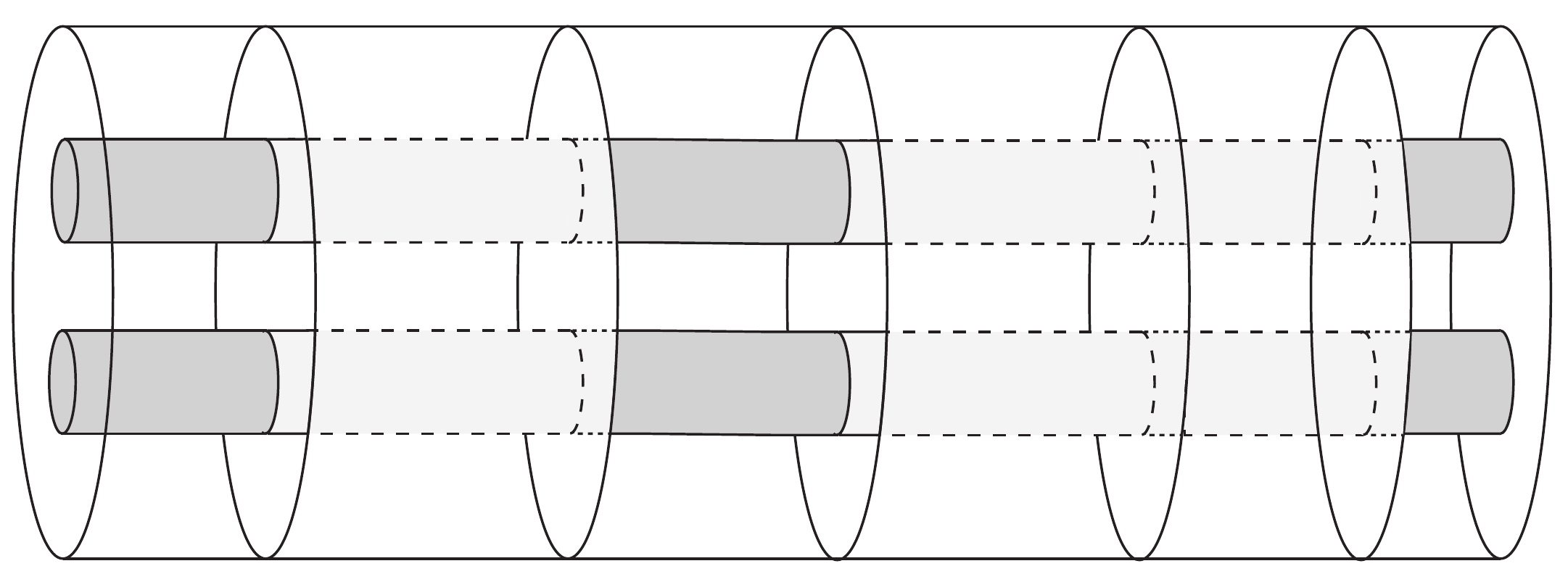}}
\end{picture}
\caption{An element of $\I_\#(3)$ ($\cong \C_1(3)$).}
\label{StackingSuboperad}
\end{figure}

The following is obvious:

\begin{proposition}
\label{Little1Cubes}
For each $n$, the space $\I_\#(n)$ is homeomorphic to $\C_1(n)$.  Thus $\I_\#(n)$ has contractible components and is equivalent to $\Sigma_n$.
\qed
\end{proposition}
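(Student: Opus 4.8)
The plan is to exhibit an explicit homeomorphism by reading off the little interval underlying each stacking muffler. Given a class $[L_0, M_1, \dots, M_n, \sigma] \in \I_\#(n)$, each $M_i$ has the form $a_i \x \id_{D^2}$ with $a_i \in \C_1(1)$, so I would define $\Phi \co \I_\#(n) \to \C_1(n)$ by $[L_0, M_1, \dots, M_n, \sigma] \mapsto (a_1, \dots, a_n)$, and conversely $\Psi \co \C_1(n) \to \I_\#(n)$ by $(a_1,\dots,a_n) \mapsto [i_2, a_1 \x \id, \dots, a_n \x \id, \iota]$, where $i_2$ is the trivial $2$-string link and $\iota$ the identity permutation. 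First I would check that $\Phi$ is well defined, i.e.\ that $(a_1, \dots, a_n)$ is genuinely a little-intervals configuration, and that $\Psi$ lands in $\I_\#(n)$, i.e.\ that $(i_2, a_1\x\id, \dots, a_n\x\id, \iota)$ satisfies the continuity constraint $(\dagger)$.

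The key step is the observation that $(\dagger)$ forces the intervals $a_1, \dots, a_n$ to have pairwise disjoint interiors, which is exactly the condition defining $\C_1(n)$. This is where the special ``full slab'' shape of the stacking mufflers is used: since $M_i(\overset{\circ}{I} \x D^2) = a_i(\overset{\circ}{I})\x D^2$ fills the entire $D^2$ factor, it cannot be nested inside the thin hole $M_k(S_2) = a_k(I) \x \overset{\circ}{S_2}$ of another muffler. Concretely, suppose $a_i$ and $a_k$ had overlapping interiors with $\sigma^{-1}(i) < \sigma^{-1}(k)$; choosing $t$ in the overlap $a_i(\overset{\circ}{I}) \cap a_k(\overset{\circ}{I})$ and $z \in D^2 \setminus S_2$ produces a point $(t,z)$ lying both in $\overline{M_i(I \x D^2) \setminus M_k(S_2)}$ and in the forbidden region $M_k(\overset{\circ}{I} \x (D^2 \setminus \overset{\circ}{S_2}))$, contradicting $(\dagger)$. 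Since $\sigma$ totally orders the mufflers, this applies to every pair, so the interiors are pairwise disjoint and $\Phi$ takes values in $\C_1(n)$; running the same computation backwards shows that any little-intervals configuration satisfies $(\dagger)$, so $\Psi$ is well defined.

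Granting that $\Phi$ and $\Psi$ are mutually inverse bijections, I would then check that both are continuous, and hence that $\I_\#(n) \cong \C_1(n)$. Both maps are essentially the identity on the interval data: $\Phi$ is the restriction of the continuous map that forgets the (constant) starting link $L_0$ and the permutation and records each $a_i$, while $\Psi$ is the composite of $(a_i) \mapsto (a_i \x \id)$ with the quotient map defining $\I_\#(n)$, hence continuous. The homotopy statement then follows immediately from the standard fact that $\C_1(n)$ has $n!$ contractible components permuted simply transitively by $\Sigma_n$, so $\C_1(n) \simeq \Sigma_n$.

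The main obstacle is verifying that the permutation $\sigma$ carries no information beyond the configuration $(a_1, \dots, a_n)$, so that $\Phi$ is injective and $\Phi \circ \Psi = \id$. When two intervals abut (share an endpoint), the corresponding slabs $M_i, M_k$ still meet along a flat face $\{\mathrm{pt}\} \x D^2$, so a priori the equivalence relation of Definition \ref{InfectionSpaces} could remember their relative $\sigma$-order even though their interiors are disjoint. The crux is therefore to argue that, for stacking mufflers, such boundary contact imposes no constraint: the only intersections among these slabs occur along faces that do not genuinely overlap, so the order in which the infections are performed is immaterial and every class is represented with $\sigma = \iota$. Making this identification precise --- in particular its compatibility with the quotient topology near abutting configurations --- is the delicate point, and it is the step I would write out most carefully.
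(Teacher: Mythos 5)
Your construction of the forgetful map $\Phi$ and the section $\Psi$, and in particular your proof that the constraint $(\dagger)$ for slab-shaped mufflers $M_i = a_i \x \id_{D^2}$ is equivalent to the intervals $a_i$ having pairwise disjoint interiors, is correct and is the genuine content here; the paper itself offers no argument at all (the proposition is declared obvious), so there is no proof to compare yours against.

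However, the step you defer to the end is a real gap, and it cannot be closed in the way you propose. You assert that for stacking mufflers ``boundary contact imposes no constraint,'' so that every class has a representative with $\sigma = \iota$. Under Definition \ref{InfectionSpaces} this is false: the equivalence relation is purely combinatorial and remembers the relative $\sigma$-order whenever the \emph{images} of $L_i$ and $L_k$ intersect, with no exception for intersections along boundary faces. Concretely, take $n=2$ with $a_1(I)=[-1,0]$ and $a_2(I)=[0,1]$, so the slabs meet along $\{0\}\x D^2$. Both $(i_2, M_1, M_2, \iota)$ and $(i_2, M_1, M_2, (12))$ satisfy $(\dagger)$ --- your own computation gives this, since $(\dagger)$ only ever constrains the open slabs $a_k(\overset{\circ}{I})\x D^2$ --- yet they are \emph{not} equivalent, because the images intersect and the relative orders disagree. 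Hence $\Phi$ is two-to-one over abutting configurations, $\Psi\circ\Phi \neq \id$, and no bijection of this kind exists; worse, with the quotient topology $\I_\#(n)$ is non-Hausdorff, since a path of separated configurations limiting onto an abutting one converges to both classes over the limit. So the failure is not merely in your write-up: the homeomorphism claim of Proposition \ref{Little1Cubes}, read against the paper's literal definitions, fails along the touching locus. What does survive --- and is all that is actually used in Theorem \ref{DecompThm} --- is the homotopy-theoretic conclusion: $\pi_0\I_\#(n)\cong\Sigma_n$ and each component is contractible. You can prove that directly: the homotopy shrinking each $a_i$ toward its midpoint destroys all touchings and never creates intersections, hence respects the equivalence relation and descends to $\I_\#(n)$; it deformation retracts each component onto its separated part, which your $\Phi$ identifies homeomorphically with the (convex, contractible) separated part of the corresponding component of $\C_1(n)$. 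Alternatively, the statement itself can be repaired by declaring two mufflers to ``intersect'' only when the images of the open cylinders $\overset{\circ}{I}\x D^2$ meet, in which case your $\Phi$ and $\Psi$ are indeed mutually inverse homeomorphisms.
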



\subsection{A decomposition theorem}
Recall that $\LL_c$ is the space of fat $c$-string links with zero framing number in each component; $\LL^0_c \subset \LL_c$ is the subspace where the linking number is 0 (defined in Section \ref{RemovingTwists}); and we have homotopy equivalences $\LL_c \simeq \L_c$ and $\LL^0_c \simeq \L^0_c$.
Let $\P_c \subset \LL_c$ be the subspace of prime $c$-component fat string links.
We will decompose a certain subspace of $\LL_2$ in terms of our infection operad and the prime links in this subspace.

\begin{definition}
Define $\S_2$ to be the subspace of $\LL_2$ consisting of certain components of $\LL_2$: the component of $\LL_2$ corresponding to a string link $L$ is in $\S_2$ if and only if $L$ is a product of prime string links, each of which is \emph{not} in the center of $\pi_0\L_2$. (In other words, each prime factor of $L$ is neither a split link nor a cable.)
Let $\mathcal{PS}_2:= \P_2 \cap \S_2$, let $\S^0_2 := \S_2 \cap \LL^0_2$, and let $\mathcal{PS}^0_2 := \mathcal{PS}_2 \cap \LL^0_2 = \P_2 \cap \S_2 \cap \LL^0_2$.
\end{definition}

Before stating our decomposition theorem, we review a useful lemma, well known to embedding theorists.  Before proving the lemma, we need to set some more definitions.
\begin{itemize}
\item
Let
$\LL=\coprod_{c=1}^\infty \LL_c$.
\item
For $L\in \LL$, let $\LL(L)$ denote the component of $L$ in $\LL$.
\item
Recall that if $L$ is an embedding of a 3-manifold with boundary into $I\x D^2$, $C_L:=D^3 \setminus \overset{\circ}{(\im L)}$, where we identify $I\x D^2$ with $D^3$.
\item
For a manifold with boundary $M$, let $\Diff(M; \d)$ denote the space of diffeomorphisms of $M$ which are the identity on the boundary.
\item
For a group $G$, let $BG$ denote the classifying space of $G$.
\end{itemize}

\begin{lemma}
\label{LinkSpaceCompsAreBGs}
For any $L \in \LL$, $\LL(L) \simeq B\Diff(C_L; \d)$.
\end{lemma}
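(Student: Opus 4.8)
The plan is to realize $\LL(L)$ as the base of a principal bundle whose total space is a contractible diffeomorphism group, so that the base becomes a classifying space of the fiber, and then to identify that fiber with $\Diff(C_L;\d)$ up to homotopy. First I would fix the embedding $L$ and consider the orbit map
\[
\mu\co \Diff(D^3;\d D^3) \to \LL_c, \qquad \phi \mapsto \phi \circ L,
\]
where $\Diff(D^3;\d D^3)$ acts on fat string links by post-composition (using the identification $I\x D^2 \cong D^3$). By the relative isotopy extension theorem \cite{Hirsch}, any fat string link in the same component as $L$ is of the form $\Phi_1 \circ L$ for an ambient isotopy $\Phi_t$ fixing $\d D^3$, so $\mu$ surjects onto $\LL(L)$. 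Its parametrized version supplies local sections, so $\mu$ is a locally trivial fibration, in fact a principal bundle for the right action (by precomposition) of the stabilizer
\[
G := \{\phi \in \Diff(D^3;\d D^3) : \phi \circ L = L\} = \Diff\big(D^3;\, \d D^3 \cup \im L\big),
\]
the group of diffeomorphisms that restrict to the identity on $\d D^3$ and on $\im L$.

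Next I would invoke Hatcher's theorem (the Smale Conjecture): the group $\Diff(D^3;\d D^3)$ is contractible. (Cerf's theorem \cite{Cerf}, already used above, gives only connectivity, which is not enough here.) Since $\mu$ is a principal $G$-bundle with contractible total space, and these diffeomorphism groups have the homotopy type of CW complexes, the base is a model for the classifying space:
\[
\LL(L) \;=\; \Diff(D^3;\d D^3)/G \;\simeq\; BG \;=\; B\,\Diff\big(D^3;\, \d D^3 \cup \im L\big).
\]

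It then remains to identify $G$ with $\Diff(C_L;\d)$ up to homotopy, via restriction to the complement $C_L = D^3 \setminus \overset{\circ}{(\im L)}$. I would factor this through the ``fix-a-neighborhood'' subgroups: the inclusion into $G$ of the diffeomorphisms fixing an entire tubular neighborhood of $\im L$ pointwise is a homotopy equivalence, because the space of such tubular neighborhoods (rel $\d D^3$) is contractible; likewise $\Diff(C_L;\d)$ is homotopy equivalent to the subgroup fixing a collar of $\d C_L$. On these collar-fixing models, restriction to $C_L$ and extension by the identity across $\d(\im L)$ are mutually inverse homeomorphisms, giving $G \simeq \Diff(C_L;\d)$ and hence $\LL(L) \simeq B\,\Diff(C_L;\d)$.

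The genuinely essential input is Hatcher's contractibility theorem; without it $\LL(L)$ would be only a twisted product rather than a classifying space. The main technical obstacle to carry out, however, is this last identification: matching ``fixes $\im L$ pointwise'' with ``diffeomorphism of the complement rel boundary'' requires the collar and tubular-neighborhood arguments to control smoothness across $\d(\im L)$, which is exactly where contractibility of the space of collars enters. Establishing local triviality of $\mu$ (needed for the principal-bundle structure) is routine given parametrized isotopy extension, but should be stated with care.
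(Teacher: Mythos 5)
Your proof is correct and takes essentially the same approach as the paper: the paper's own argument is exactly the restriction fibration $\Diff(C_L;\d) \to \Diff(D^3;\d) \to \LL(L)$ together with Hatcher's Smale Conjecture, from which $\LL(L) \simeq B\Diff(C_L;\d)$ follows. Your principal-bundle formulation, the isotopy-extension justification of local triviality, and the collar/tubular-neighborhood identification of the stabilizer with $\Diff(C_L;\d)$ just make explicit the points the paper's two-sentence proof leaves implicit.
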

\begin{proof}
Given a diffeomorphism in $\Diff(D^3, \d)$, we can restrict to the image of $L$ to get a fibration
\[
\xymatrix{\Diff(C_L; \d) \ar[r] & \Diff(D^3; \d) \ar[r] & \LL(L).}
\]
Hatcher showed that $\Diff(D^3; \d)$ is contractible (the Smale Conjecture \cite{HatcherSmaleConj}), which implies the result.
\end{proof}

\begin{theorem}
\label{DecompThm}
The subspace $\S^0_2$ is freely generated over the
stacking suboperad $\I_\#$
by its subspace $ \PS^0_2$ of non-split, non-cable prime string links.
More precisely,
\[
\S^0_2 \simeq
\I_\#(\PS^0_2 \sqcup \{*\}) :=
\coprod_{n=0}^\infty  \I_\#(n) \x_{\Sigma_n} (\PS^0_2 \sqcup \{*\})^n
\left( \simeq \coprod_{n=0}^\infty (\PS^0_2 \sqcup \{*\})^n \right)
\]
where $\{ * \}$ corresponds to the component of the trivial 2-string link.  Furthermore $\S_2 \cong \S^0_2 \x \Z$.
\end{theorem}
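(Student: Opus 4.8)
The plan is to adapt Budney's proof that $\K \simeq \C_2(\P)$ \cite{Budney} to the present setting, replacing the little $2$-cubes operad by the stacking operad $\I_\#$ and prime knots by non-split, non-cable prime $2$-string links. First I would dispose of the $\Z$ factor. The twisting $\Z$-action of Section \ref{RemovingTwists} acts on $\LL_2$ by composition with a full twist, i.e.\ by multiplication by a central pure braid; hence it carries a product of non-central primes to another such product and so preserves the subspace $\S_2$. Since $\S_2$ is $\Z$-invariant and $\S_2 \cap \LL^0_2 = \S^0_2$, the homeomorphism $\LL_2 \cong \LL^0_2 \x \Z$ restricts to $\S_2 \cong \S^0_2 \x \Z$, and it remains to establish the equivalence $\S^0_2 \simeq \I_\#(\PS^0_2 \sqcup \{*\})$.

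Next I would build the comparison map. The operad action of Theorem \ref{DefnThm}, restricted along $\PS^0_2 \incl \S^0_2$ and the basepoint supplied by the trivial link, gives a map $\Phi\co \I_\#(\PS^0_2 \sqcup \{*\}) \to \S^0_2$ sending a configuration of pucks together with prime links to the stacked string link. This lands in $\S^0_2$ because stacking non-central primes of linking number $0$ yields a product of non-central primes of linking number $0$, and on $\pi_0$ it realizes the monoid product. By Theorem \ref{hypo}, every component of $\S^0_2$ has a prime factorization that is unique (with no central factors present, there is not even reordering), so $\Phi$ is a bijection on $\pi_0$. What remains is to show $\Phi$ is a homotopy equivalence on each component.

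Fix $L = L_1 \# \cdots \# L_m$ with each $L_i \in \PS^0_2$. By Lemma \ref{LinkSpaceCompsAreBGs} the target component is $\LL(L) \simeq B\Diff(C_L; \d)$, while the source component, since the relevant component of $\I_\#(m) \cong \C_1(m)$ is contractible and $\Sigma_m$ acts freely, is $\simeq \prod_{i=1}^m \LL(L_i) \simeq \prod_{i=1}^m B\Diff(C_{L_i}; \d)$. The heart of the argument is the cross-sectional decomposition of $C_L$: the $m-1$ disks $\{t_j\} \x D^2$ separating the stacked factors meet $L$ in two points each, so they cut $C_L$ along planar pair-of-pants surfaces into the pieces $C_{L_1}, \dots, C_{L_m}$. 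I would then argue that this family of surfaces is canonical up to isotopy and preserved (up to isotopy) by $\Diff(C_L; \d)$, so that restriction to the pieces yields $\Diff(C_L; \d) \simeq \prod_i \Diff(C_{L_i}; \d)$ and hence $B\Diff(C_L; \d) \simeq \prod_i B\Diff(C_{L_i}; \d)$, matching the source under $\Phi$.

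The main obstacle is exactly this rigidity of the pair-of-pants decomposition, and it is where the defining hypotheses of $\S_2$ enter. If a prime factor were split, its splitting disk would furnish an essential compressing disk permitting the pieces to be recombined; if a factor were a cable, its cabling annulus would furnish a companion torus producing extra, mutually commuting diffeomorphisms. Excluding central primes removes precisely these phenomena, so that the separating pairs of pants form an essential, canonical (JSJ-type) family, unique up to isotopy, forcing the diffeomorphism group to split as above. I expect the careful verification of this uniqueness, together with the claim that boundary-fixing diffeomorphisms preserve the family up to isotopy, to be the most delicate step; the remaining bookkeeping (the $\Sigma_m$-equivariance, the contractibility of the chosen component of $\C_1(m)$, and the assembly of these component-wise equivalences into a global one) should be routine.
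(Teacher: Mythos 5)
Your outline matches the paper's proof in most respects: the reduction $\S_2 \cong \S^0_2 \x \Z$ via Section \ref{RemovingTwists}, the $\pi_0$ bijection from Theorem \ref{hypo}, the identification $\LL(L) \simeq B\Diff(C_L;\d)$ from Lemma \ref{LinkSpaceCompsAreBGs}, and the cutting of $C_L$ along pair-of-pants ``decomposing disks'' whose system is unique up to isotopy (with no reordering) because $L$ has no central prime factors. But there is a genuine gap at the step you yourself call the heart of the argument: knowing that the family of decomposing surfaces is canonical up to isotopy and is preserved up to isotopy by every element of $\Diff(C_L;\d)$ does \emph{not} force $\Diff(C_L;\d) \simeq \prod_i \Diff(C_{L_i};\d)$. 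A boundary-fixing diffeomorphism of $C_L$ can carry the surface family onto itself up to isotopy of its \emph{image} while restricting to a nontrivial mapping class of the pairs of pants --- braiding the two holes around each other, or Dehn twisting about the meridional boundary circles. In the restriction fibration $\prod_i \Diff(C_{L_i};\d) \to \Diff(C_L;\d) \to \Emb\left(\coprod_i D_i, C_L\right)$, Hatcher's theorem gives that the base has contractible \emph{components}, but the base is not connected, and the splitting you assert is exactly the statement that the restriction map only hits the component of the inclusion. Nothing in your proposal rules out the other components.

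This is where the paper does real work, and it is precisely where the hypothesis $\S^0_2 \subset \LL^0_2$ (linking number zero, with framing zero built into $\LL$) enters --- a hypothesis your component-wise argument never uses, which is a sign that something is missing. The paper applies the classifying-space functor to the fibration, identifies the resulting base with $\prod \mathrm{Conf}_2(D^2)$, a $\mathrm{K}(\pi,1)$, and then shows the map $\pi_1(\LL(L)) \to \pi_1\left(\prod \mathrm{Conf}_2(D^2)\right)$ is zero: a loop of string links whose monodromy braided the two punctures of some $D_i$ would, by uniqueness of the disk system, yield a decomposition of $L$ in which some summand has nonzero twisting number $\ell$, contradicting that the loop lies in $\S^0_2$. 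Only then do the long exact sequence of the fibration and the Whitehead theorem give $\LL(L) \simeq \prod_i \LL(L_i)$. Your claimed group-level splitting is in fact true, but it is a \emph{consequence} of this monodromy-vanishing argument rather than of isotopy-uniqueness of the surfaces alone; your appeal to JSJ-type rigidity conflates uniqueness of the image of the surface family with triviality of the induced mapping classes on it.
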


\begin{proof}
First note that by Theorem \ref{hypo} we have a bijection on $\pi_0$.  In fact, a prime decomposition $L=L_1\#...\#L_n$ corresponds to an isotopy class of an equivalence class of infection diagram in $\I_\#$ with $n$ mufflers exactly as in Definition \ref{StackingSuboperad}.

Now we  will check that we have an equivalence on each component of $\S^0_2$.  So fix $L \in \S^0_2$. 
Let $C_L$ denote the complement of $L$ in $D^3$, as above.

\begin{definition}
For a $c$-component string link $L$, a \emph{decomposing disk} $D \subset C_L$ is a 2-disk with $c$ open 2-disks removed which is properly embedded in $C_L$ in such a way that $c$ of its boundary components are (isotopic to) the $c$ meridians of $L$.
\end{definition}
Note that a decomposing disk $D$ is incompressible in $C_L$ \cite[Lemma 2.9]{StringLinkMonoid}.  

A prime decomposition $L= L_1\#...\#L_n$ corresponds to a maximal collection of decomposing disks $D_1,...,D_{n-1}$ such that no two $D_i$ are isotopic.  
Thus the decomposing disks $D_1,...,D_{n-1}$ cut $C_L$ into $n$ pieces that are precisely $C_{L_1},...,C_{L_n}$.
Recall the uniqueness of prime decompositions for $L \in \S^0_2$ given by Theorem \ref{hypo}.  The proof of this theorem implies that (the image of) such a maximal collection of decomposing disks is unique up to isotopy.  Note that the prime factors of $L\in \S^0_2$ cannot even be reordered.  

Now consider the fibration
\begin{equation}
\label{DiffDiffEmb}
\xymatrix{
\Diff\left(\coprod_{i=1}^n C_{L_i}; \d \right) \ar[r] &  \Diff(C_L; \d) \ar[r] &
\Emb \left(\coprod_{i=1}^n D_i, \,C_L \right)
}
\end{equation}
Hatcher proved \cite{HatcherIncomprSurfs} that for a 3-manifold $M$ and a properly embedded incompressible surface $S\subset M$,
the space $\Emb(S,M)$ has contractible components unless $S$ is a torus.  (Strictly speaking, Hatcher proves this for connected $S$, but for $S=\sqcup_{i=1}^n S_i$ with each $S_i$ a connected surface, one can use the fibration 
\[
\xymatrix{
\Emb\left(S_n,\, M \setminus \left(\coprod_{i=1}^{n-1} S_i\right)\right) \ar[r] &
\Emb\left(\coprod_{i=1}^n S_i,\, M\right) \ar[r] &
\Emb\left(\coprod_{i=1}^{n-1} S_i,\, M\right)
}
\]
and induction on $n$ to get the result, noting that Hatcher's theorem applies when the 3-manifold is a component of $M \setminus (\sqcup_{i=1}^{n-1} S_i)$.)

Thus the components of the base space in (\ref{DiffDiffEmb}) are contractible.  Since the images of the $D_i$ are determined up to isotopy, we may replace $\Emb(\coprod_{i=1}^n D_i, C_L)$ by $\Diff(\coprod_{i=1}^n D_i)$ (since the latter space also has contractible components).  Note that the fiber in (\ref{DiffDiffEmb}) is $\prod_{i=1}^n \Diff(C_{L_i})$.  So we have 
\[
\xymatrix{
\Diff(\coprod_{i=1}^n D_i) \ar[r] & \Diff(C_L; \d) \ar[r] & \Diff(\coprod_{i=1}^n D_i).
}
\]

Now apply the classifying space functor $B(-)$ to the above fibration.  By Lemma \ref{LinkSpaceCompsAreBGs}, we get 
\[
\xymatrix{
\prod_{i=1}^n \LL(L_i) \ar[r] &  \LL(L) \ar[r] & \prod_{i=1}^n \mathrm{Conf}_2(D^2)
}
\]
where $\mathrm{Conf}_2(D^2)$ is space of ordered distinct pairs in $D^2$ (or the classifying space of the braid group on two strands).  The base space is a $\mathrm{K}(\pi,1)$, i.e., it has trivial $\pi_i$ for $i>1$.  We claim that on $\pi_1$, the fibration is the zero map: in fact, if $\alpha \in \pi_1(\LL(L))$ produced a nontrivial braid (say, in the $i^\mathrm{th}$ factor), then in $\alpha(1)$, at least one of the two summands determined by $D_i$ would have nonzero $\ell$ (number of twists), contradicting that $\alpha$ is a loop (in $\S_2^0$).

So by the long exact sequence in homotopy groups for a fibration, the map from fiber to total space is an isomorphism on $\pi_i$ for all $i\geq 0$.  Then by the Whitehead Theorem,
\[
\LL(L) \simeq \prod_{i=1}^n \LL(L_i).
\]
The right-hand space can be rewritten as $\Sigma_n \x_{\Sigma_n} \prod_{i=1}^n \LL(L_i)$, which by Proposition \ref{Little1Cubes} is equivalent to $ \I_\#(n) \x_{\Sigma_n} \prod_{i=1}^n \LL(L_i)$.  This proves the main assertion of the theorem.

The remaining assertion, that $\S_2 \cong \S^0_2 \x \Z$, follows immediately from Section \ref{RemovingTwists}.
\end{proof}

\subsection{Final remarks and future directions}
We have described the components of links in $\S_2$ in terms of the components of the prime links in $\S_2$.  In general, we do not have descriptions of the components of the prime links in $\S_2$ themselves.
However, we can describe some components of $\L_2$.  We believe that at least some of these descriptions have been known to experts.

\begin{proposition}
The component of a 2-string link $R \in \LL_2$ which is a rational tangle is contractible.
\end{proposition}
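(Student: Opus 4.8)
The plan is to combine Lemma~\ref{LinkSpaceCompsAreBGs} with the Smale Conjecture and Hatcher's incompressible-surface theorem. By Lemma~\ref{LinkSpaceCompsAreBGs} the component $\LL(R)$ is homotopy equivalent to $B\Diff(C_R;\d)$, so it is enough to prove that the group $\Diff(C_R;\d)$ is contractible. First I would exploit the hypothesis that $R$ is a rational tangle. By Proposition~\ref{RationalTFAE} this is equivalent to the existence of a diffeomorphism of pairs $\phi\co (I\x D^2, \im R) \cong (I\x D^2, S_2)$, which restricts to a diffeomorphism $C_R \cong C_T$ of complements, where $C_T$ denotes the complement of the trivial $2$-string link. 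Since the isomorphism type of $\Diff(M;\d)$ as a topological group depends only on the diffeomorphism type of the manifold-with-boundary $M$ (conjugation by $\phi$ carries $\d C_R$ onto $\d C_T$ and hence preserves the ``rel $\d$'' condition), it suffices to show that $\Diff(C_T;\d)$ is contractible.

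Next I would identify $C_T$ geometrically: removing from the ball $I\x D^2$ the two trivial, unknotted, unlinked tubes comprising $S_2$ yields a genus-$2$ handlebody $H$ (its boundary is the genus-$2$ surface noted in Section~\ref{Infection}). I would choose a complete system of two disjoint, properly embedded, essential disks $D_1 \sqcup D_2 \subset H$ that cut $H$ into a $3$-ball; their boundaries lie on $\d H = \d C_T$, which is held fixed by every element of $\Diff(C_T;\d)$. Restriction to $D_1 \sqcup D_2$ then gives, by the isotopy extension theorem, a fibration
\[
\xymatrix{
\Diff(D^3;\d) \ar[r] & \Diff(C_T;\d) \ar[r] & \Emb(D_1 \sqcup D_2,\, C_T)_0,
}
\]
where the subscript $0$ denotes the component of the standard inclusion and the fiber is the group of diffeomorphisms of the cut-open manifold $H|(D_1 \sqcup D_2)\cong D^3$ fixing its boundary. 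The fiber is contractible by the Smale Conjecture \cite{HatcherSmaleConj}. Each $D_i$ is incompressible and is not a torus, so by Hatcher's theorem \cite{HatcherIncomprSurfs}---applied to the disconnected surface $D_1\sqcup D_2$ through the same inductive fibration argument used in the proof of Theorem~\ref{DecompThm}---the base has contractible components. The long exact sequence of homotopy groups then forces $\pi_i\Diff(C_T;\d)=0$ for all $i$, so $\Diff(C_T;\d)$ is contractible and $\LL(R)\simeq B\Diff(C_R;\d)$ is contractible as well.

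The steps that are merely standard are the identification of $C_T$ with a genus-$2$ handlebody and the fact that a genus-$2$ handlebody admits a two-disk system cutting it into a ball. I expect the main obstacle to be the careful handling of boundary conditions: one must verify that the cutting disks may be chosen with boundary on the fixed part of $\d C_T$, that the restriction map is genuinely a fibration onto the inclusion component with fiber $\Diff(D^3;\d)$, and that Hatcher's theorem is being applied to honestly essential (boundary-incompressible) disks. Once this bookkeeping is in place, contractibility follows formally from the two deep inputs---the Smale Conjecture and Hatcher's incompressible-surface theorem---both of which are already invoked elsewhere in this paper.
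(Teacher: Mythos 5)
Your overall route is the same as the paper's: reduce to showing $\Diff(C_R;\d)$ is contractible (the paper does this with the fibration $\Diff(C_R;\d)\to\Diff(D^3;\d)\to\LL(R)$ rather than through Lemma \ref{LinkSpaceCompsAreBGs}, but these are interchangeable), identify the complement with a genus-$2$ handlebody, and kill its boundary-fixing diffeomorphism group using the Smale Conjecture and Hatcher's incompressible-surface theorem. The paper proves the handlebody statement for all genera by induction, cutting along one meridian disk at a time; you cut along a complete two-disk system at once. That difference by itself is harmless, and your explicit appeal to Proposition \ref{RationalTFAE} to replace $C_R$ by the complement $C_T$ of the trivial fat $2$-string link is if anything more careful than the paper, which simply asserts $C_R$ is a handlebody.

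The gap is in your fibration
\[
\Diff(D^3;\d) \to \Diff(C_T;\d) \to \Emb(D_1\sqcup D_2,\, C_T)_0 .
\]
For the total space over the inclusion component to be \emph{all} of $\Diff(C_T;\d)$, you need every diffeomorphism of $C_T$ fixing $\d C_T$ pointwise to carry the system $D_1\sqcup D_2$ to a system isotopic rel boundary to it. The isotopy extension theorem gives surjectivity \emph{onto} that component, but nothing you have said rules out a boundary-fixing diffeomorphism sending the disks to non-isotopic disks with the same boundary curves; in that case the preimage of $\Emb(D_1\sqcup D_2, C_T)_0$ is only a union of components of $\Diff(C_T;\d)$, and your long exact sequence shows only that this union is contractible, leaving $\pi_0\Diff(C_T;\d)$ potentially nontrivial. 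The missing ingredient is exactly what the paper supplies when it proves $\Emb(S,H)$ is \emph{connected}: handlebodies are irreducible, and standard innermost-disk arguments show that two properly embedded disks with the same boundary are isotopic rel boundary (applied one disk at a time to align the whole system). With that uniqueness statement your argument closes up, but it is a genuine piece of 3-manifold topology, not the boundary-condition bookkeeping you flagged as the main obstacle; it is the third essential input alongside Smale and Hatcher.
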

\begin{proof}
We have a fibration
\begin{equation}
\label{UnlinkFibn}
\xymatrix{ \Diff(C_R; \d) \ar[r] & \Diff(D^3; \d) \ar[r] & \LL(R)}
\end{equation}
given by restricting to the image of $R$.  The total space is contractible by the Smale Conjecture.
So it suffices to show that the fiber $\Diff(C_R; \d)$ is contractible.  Note that $C_R$ is a genus-2 handlebody.

We claim that for any 3-dimensional handlebody $H$, $\Diff(H; \d)$ is contractible.  This can be proven by induction on the genus.  The basis case of genus 0 is the Smale Conjecture.  For the induction step, let $S$ be a meridional disk in $H$.  Consider the fibration 
\[F\to \Diff(H; \d) \to \Emb(S,H)\]
where the base is the space of proper embeddings of $S$ with fixed behavior on $\d S$.  The fiber $F$ is the space of diffeomorphisms of a handlebody whose genus is 1 less than that of $H$, and it is contractible by the induction hypothesis.  Hatcher's result on incompressible surfaces says that $\Emb(S,H)$ has contractible components.
Furthermore, we claim that any two such embeddings of $S$ are isotopic; this can be proven using the fact that handlebodies are irreducible (i.e., every 2-sphere in $H$ bounds a 3-ball) and standard ``innermost disk'' arguments from 3-manifold theory.  Hence $\Emb(S,H)$ is connected, hence contractible.  
Thus $\Diff(H; \d)$ is contractible.  Thus the base space in the fibration (\ref{UnlinkFibn}) is also contractible.
\end{proof}

Recall the definitions of split links and splitting disks from Definition \ref{SplitAndCable}.

\begin{proposition}
\label{SplitLinkProp}
If $L$ is a split string link which splits as links $L_1, L_2$, then $\LL(L) \simeq \LL(L_1) \x \LL(L_2)$.
\end{proposition}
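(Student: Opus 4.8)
The plan is to follow the template of the proof of Theorem \ref{DecompThm}, reducing the statement to one about diffeomorphism groups via Lemma \ref{LinkSpaceCompsAreBGs} and then analyzing the restriction fibration associated to the splitting disk. By Lemma \ref{LinkSpaceCompsAreBGs} we have $\LL(L) \simeq B\Diff(C_L; \d)$ and $\LL(L_i) \simeq B\Diff(C_{L_i}; \d)$ for $i=1,2$. Since the classifying space functor $B(-)$ carries products to products and preserves homotopy equivalences, it suffices to produce a homotopy equivalence $\Diff(C_L; \d) \simeq \Diff(C_{L_1}; \d) \x \Diff(C_{L_2}; \d)$, and then to apply $B(-)$ together with Lemma \ref{LinkSpaceCompsAreBGs}.

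Let $D$ be the splitting disk, which is properly embedded in $C_L$ and disjoint from $L$. First I would record the geometric decomposition: $D$ separates $D^3 = I\x D^2$ into two balls $B_1, B_2$ containing $L_1, L_2$ respectively, so that cutting $C_L$ along $D$ yields $C_{L_1} \sqcup C_{L_2}$ (each $B_i$, with $D$ in its boundary, being identified with a copy of $I\x D^2$ containing the string link $L_i$). I would then consider the fibration obtained by restricting a diffeomorphism to $D$,
\[
\xymatrix{
\Diff(C_{L_1}; \d) \x \Diff(C_{L_2}; \d) \ar[r] & \Diff(C_L; \d) \ar[r] & \Emb(D, C_L),
}
\]
where $\Emb(D,C_L)$ denotes proper embeddings of $D$ agreeing with the standard inclusion on $\d D$. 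The fiber over the standard inclusion consists of the diffeomorphisms fixing $D$ pointwise; cutting along $D$ identifies this fiber with $\Diff(C_{L_1}; \d) \x \Diff(C_{L_2}; \d)$, since each factor must be the identity on the entire boundary of $C_{L_i}$ (the original boundary together with the copy of $D$).

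It then remains to show that the relevant part of the base is contractible. The splitting disk $D$ is incompressible and boundary-incompressible in $C_L$: it cannot be boundary-parallel because each of $B_1, B_2$ contains link components, so it is essential. Hence Hatcher's theorem \cite{HatcherIncomprSurfs} applies (a disk is not a torus) and shows that $\Emb(D, C_L)$ has contractible components. Restricting the fibration to the image of the restriction map, which is the $\Diff(C_L;\d)$-orbit of the standard disk, the long exact sequence in homotopy groups then gives $\Diff(C_L; \d) \simeq \Diff(C_{L_1}; \d) \x \Diff(C_{L_2}; \d)$, and applying $B(-)$ and Lemma \ref{LinkSpaceCompsAreBGs} yields $\LL(L) \simeq \LL(L_1) \x \LL(L_2)$.

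The main obstacle I anticipate is verifying that this orbit is a single contractible component rather than several; otherwise the fibration would only show $\Diff(C_L;\d)$ to be equivalent to a disjoint union of copies of $\Diff(C_{L_1};\d)\x\Diff(C_{L_2};\d)$, one per orbit component. Equivalently, I must check that the splitting disk is unique up to isotopy rel $\d D$. I expect to handle this by a standard innermost-disk argument (given a competing splitting disk $D'$, make $D\cap D'$ transverse and remove innermost circles and arcs using irreducibility and boundary-irreducibility of the ambient $D^3$, concluding that $D'$ is isotopic to $D$), or by invoking the uniqueness of the split decomposition implicit in Theorem \ref{hypo} and \cite{StringLinkMonoid}. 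I note that, in contrast to the decomposing-disk fibration of Theorem \ref{DecompThm}, here the base involves a genuine $2$-disk rather than a punctured disk, so that no braid-group or linking-number correction to the $\pi_1$ map is required and the argument is correspondingly cleaner.
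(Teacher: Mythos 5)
Your proof is correct and follows essentially the same route as the paper's: the same restriction fibration $\Diff(C_{L_1}; \d) \x \Diff(C_{L_2}; \d) \to \Diff(C_L; \d) \to \Emb(D, C_L)$, Hatcher's theorem on incompressible surfaces for contractibility of components of the base, and irreducibility of $C_L$ (via an innermost-disk argument) to see that all such embeddings of the splitting disk are isotopic, so the base is connected and contractible. Your extra care about the orbit of the standard disk and the explicit passage through $B(-)$ and Lemma \ref{LinkSpaceCompsAreBGs} only makes explicit what the paper leaves implicit.
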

\begin{proof}
Let $D$ be a splitting disk for $L$.  Consider the fibration
\[
\Diff(C_{L_1}; \d) \x \Diff(C_{L_2}; \d) \to \Diff(C_L; \d) \to \Emb(D, C_L)
\]
where $\Emb(D, C_L)$ is the space of embeddings of $D$ which agree on $\d D$ with the given embedding of $D$.  By Hatcher's theorem on incompressible surfaces, this space has contractible components.  Irreducibility of $C_L$ implies further that any two such embeddings of $D$ in $C_L$ are isotopic, showing that the base space is connected, hence contractible.
This gives us the desired equivalence.
\end{proof}

If we restrict our attention to 2-string links, the split links are just those links which are obtained by tying a knot in one or both strands.  So Budney's work \cite{BudneyTop} together with Proposition \ref{SplitLinkProp} gives a description of the homotopy type of each such component of $\L_2$.

We conclude by mentioning two open problems that immediately stand out as follow-ups to Theorem \ref{DecompThm}:
\begin{itemize}
\item[(1)]
to determine the homotopy types of components of prime non-central 2-string links.
\item[(2)]
to understand how different types of 2-string links interact, i.e., find a generalization of Theorem \ref{DecompThm} from the subspace $\S_2$ to the space of all 2-string links.
\end{itemize}

 \bibliographystyle{plain}
\bibliography{refs}

\end{document}